\documentclass[11pt,reqno]{amsart}
\setlength{\hoffset}{-.5in}
\setlength{\voffset}{-.25in}
\usepackage{amssymb,latexsym}
\usepackage{graphicx}
\usepackage{fancyhdr}

\usepackage[all]{xy}
\usepackage{enumerate}
\textwidth=6.175in
\textheight=8.5in

\theoremstyle{plain}
\numberwithin{equation}{section}
\newtheorem{thm}{Theorem}[section]
\newtheorem{theorem}[thm]{Theorem}

\newtheorem{corollary}[thm]{Corollary}

\newtheorem{lemma}[thm]{Lemma}

\newtheorem{definition}[thm]{Definition}

\newtheorem*{theorem*}{Theorem}

\begin{document}

\setcounter{page}{1}

\title[On Isoclinism \& and Baer's theorem for Lie superalgebras]{On Isoclinism \&  Baer's theorem for Lie superalgebras}
\author[Nayak]{Saudamini Nayak}
\address{Department of Mathematics\\
National Institute of Technology Calicut\\
         NIT Campus P.O-673601, 
          Kozhikode,  India}
\email{ saudamini@nitc.ac.in}

\subjclass[2010]{Primary 17B55; Secondary 17B05.}
\keywords{ Isoclinism; Multiplier; Covers; Central extension; Stem Lie superalgebra}

\begin{abstract}\label{abstract}
In this paper we define isoclinism for Lie superalgebras and using the concept of isoclinism, we give the structure of all covers of Lie superalgebras when their Schur multipliers are finite dimensional. It has been shown that that maximal stem extensions of Lie superalgebras are precisely same as the stem covers. Furthermore, we have defined stem Lie superalgebra and prove that each isoclinic family $\mathcal{C}$ contain a stem Lie superalgebra $T$ and it is the one having minimum even and odd dimension. Finally we have proved a converse of Schur's theorem and  have given  bound for stem Lie superalgebra. Then we state and prove Baer's theorem( generalisation of Schur's theorem) and a converse of it.
\end{abstract}
\maketitle

\section{Introduction}\label{intro}
For a given group $G$, the notion of the Schur multiplier $\mathcal{M}(G)$ arose from the work of I. Schur on projective representation of groups as the second cohomology group with coefficients in $\mathbb{C}^{*}$ (see \cite{Schur1904}). Let $0\longrightarrow R \longrightarrow F \longrightarrow G \longrightarrow 0$ be a free presentation of the group $G$, then it can be shown by Hopf's formula that $\mathcal{M}(G) \cong R\cap [F,F]/[R,F]$ (see \cite{Kar1987}). 
The finite dimensional Lie algebra analogue to the Schur multiplier was developed in \cite{Batten1993} and later it has been studied by several authors. Let $A$ be a Lie algebra over a field $\mathbb{F}$ with a free presentation $0\longrightarrow R \longrightarrow F \longrightarrow A \longrightarrow 0$, where $F$ is a free Lie algebra. Then the Schur multiplier of $A$, denoted by $\mathcal{M}(A)$, and is defined to be the factor Lie algebra $(R \cap[F,F])/[R,F]$ \cite{Batten1993}. For  more information about the Schur multiplier of Lie algebras one can refer \cite{Batten1993,BMS1996,BS1996} and the references therein. Batten in \cite{Batten1993} showed that if $A$ is finite dimensional, then its Schur multiplier is isomorphic to $H^{2}(A,\mathbb{F})$, the second cohomology group of $A$. Similarly Schur multiplier for $n$-Lie algebras are defined and studied.  Schur multipliers are important because sometimes by just looking at the dimension of it, one can completely know the structure of corresponding Lie algebras (see \cite{BMS1996, Nir2011}). In 1940, P. Hall  introduced an equivalence relation on the class of all groups called isoclinism, which is weaker than isomorphism and plays an important role in classification of finite $p$-groups. In 1994, K. Moneyhun \cite{Moneyhun1994} gave a Lie algebra analogue to the concept of isoclinism.

\smallskip

The theory of Lie superalgebras and Lie supergroups has many applications in various areas of Mathematics and Physics. In 1975, Kac \cite{KAC1977} offered a comprehensive description of the mathematical theory of Lie superalgebras, and has established the classification of all finite-dimensional simple Lie superalgebras over an algebraically closed field of characteristic zero. Also he studied representation theory of classical simple Lie superalgebras and many results are extension of results of Lie algebras. So here we plan to extend the notation of isoclinism, Schur multiplier, covers of Lie algebras to Lie superalgebras and study some properties.

\smallskip

First we recall some definitions of Lie superalgebra, and fix some notation which we shall be using through out. Write $\mathbb{Z}_{2}=\{\bar{0}, \bar{1}\}$ is a field. A $\mathbb{Z}_{2}$-graded vector space is simply a direct sum of vector spaces $V_{\bar{0}}$ and $V_{\bar{1}}$ such that $V = V_{\bar{0}} \oplus V_{\bar{1}}$. A $\mathbb{Z}_2$-graded vector space (resp. $\mathbb{Z}_2$-graded algebra) is also referred to as a superspace (resp. superalgebra). We consider all vector superspaces and superalgebras are over $\mathbb{F}$(characteristic of $F \neq 2,3$). Elements in $V_{\bar{0}}$ (resp. $V_{\bar{1}}$) are called even (resp. odd) elements. Non-zero elements of $V_{\bar{0}} \cup V_{\bar{1}}$ are called homogeneous elements. For a homogeneous element $v \in V_{\sigma}$, with $\sigma \in \mathbb{Z}_{2}$ we set $|v| = \sigma$ is the degree of $v$. A  vector subspace $U$ of $V$ is called  sub superspace(or, subspace) if $U= (V_{\bar{0}} \cap U) \oplus (V_{\bar{1}} \cap U)$. We adopt the convention that whenever the degree function appeared in a formula, the corresponding elements are supposed to be homogeneous.
\smallskip

A {\it Lie superalgebra} \cite{Musson2012} is a superspace $L = L_{\bar{0}} \oplus L_{\bar{1}}$ with a bilinear mapping
$ [., .] : L \times L \rightarrow L$ satisfying the following identities:

\begin{enumerate}
\item $[L_{\alpha}, L_{\beta}] \subset L_{\alpha+\beta}$, for $\alpha, \beta \in \mathbb{Z}_{2}$ ($\mathbb{Z}_{2}$-grading),
\item $[x, y] = -(-1)^{|x||y|} [y, x]$ (graded skew-symmetry),
\item $(-1)^{|x||z|} [x,[y, z]] + (-1)^{ |y| |x|} [y, [z, x]] + (-1)^{|z| |y|} [z,[ x, y]] = 0$ (graded Jacobi identity).
\end{enumerate}
for all homogeneous elements $x, y, z \in L$. 

\smallskip

For a Lie superalgebra $L = L_{\bar{0}} \oplus L_{\bar{1}}$, the even part $L_{\bar{0}}$ is a Lie algebra and $L_{\bar{1}}$ is a $L_{\bar{0}}$-module. If $L_{\bar{1}} = 0$, then $L$ is just Lie algebra. Lie superalgebra without even part, i.e., $L_{\bar{0}} = 0$, is an abelian Lie superalgebra, as $[x, y] = 0$ for all $x, y \in L$. A sub superlgebra (or, subalgebra) of $L$ is a subspace which is closed under bracket operation. Define $[L, L] $ is the all finite linear combination of elements of the form $[x, y]$, is a subalgebra of $L$ is called derived subalgebra.  A graded ideal is a $\mathbb{Z}_{2}$-graded subspace $I$ of $L$ satisfying $[I,L]\subseteq I$. If $I$ and $J$ are graded ideals of $L$, then so is $[I, J]$. The derived subalgebra $L^{2}$ is a graded ideal. For
$Z(L) = \{z\in L : [z, x] = 0\;\mbox{for all}\;x\in L\}$
is a graded ideal and it is called the {\it center} of $L$. 
\smallskip

By a {\it homomorphism} between superspaces $f: V \rightarrow W $ of degree $|f|\in \mathbb{Z}_{2}$, we mean a linear map satisfying $f(V_{\alpha})\subseteq W_{\alpha+|f|}$ for $\alpha \in \mathbb{Z}_{2}$. In particular, if $|f| = \bar{0}$, then the homomorphism $f$ is called homogeneous linear map of even degree(or, degree zero map). A Lie superalgebra homomorphism $f: L \rightarrow M$ is a  homogeneous linear map of even degree such that $f[x,y] = [f(x), f(y)]$ holds for all $x, y \in L$. If $I$ is an ideal of $L$, the quotient Lie superalgebra $L/I$ inherits a canonical Lie superalgebra structure such that the natural projection map becomes a homomorphism. The notion of {\it epimorphisms, isomorphisms} and {\it automorphisms} have the obvious meaning. Throughout for superdimension of Lie superalgebra $L$ we simply write $\dim(L)=(m\mid n)$, where $\dim L_{\bar{0}} = m$, and $\dim L_{\bar{1}} = n$. 

\smallskip

For Lie superalgebra $L$ the lower central series is the descending series
\[ L=L^{1}  \supset L^{2}  \supset  \cdots \supset L^{c} \cdots\] where $L^{c}=[L^{c-1}, L]$ for all $ c \geq 1$. If $L^{c} = \{0\}$ for some $c$,  then $L$ is called nilpotent Lie superalgebra. So $L^{2}=[L,L]$, and $L$ is abelian if and only if $L^{2}=0$. Abelian Lie superalgebras are certainly nilpotent.
The upper central series of Lie superalgebra $L$ is the ascending series 
\[\{0\}=Z_{0}(L)\subseteq Z_{1}(L)\subseteq Z_{2}(L) \cdots\] given by the fact that  $Z_{1}(L) = Z(L)$, and  the $i$-th centre of $L$ is defined inductively by 
\[\frac{Z_i(L)}{Z_{i-1}(L)} = Z\left(\frac{L}{Z_{i-1}(L)}\right)\]
for all $i \geq 1$. Note here that $ Z_{i}(L)= \{ x \in L \mid [x, y] \in Z_{i-1}(L), \forall 
y \in L \}$ for $i \geq 1.$ We define $[x_{1}, x_{2}, \cdots, x_{t}]:=[\cdots, [[x_{1}, x_{2}], x_{3}] \cdots x_{t}]$ an element of $L^{t}$ where $x_{i} \in L$ for $1 \leq i \leq t$. With this notation, $ Z_{i}(L)= \{ x \in L \mid [x, y_{1}, \cdots, y_{i}]=0,  \forall y_{j} \in L \}$.

\smallskip

The free Lie superalgebra on a $\mathbb{Z}_{2}$-graded set $X = X_{\bar{0}} \cup X_{\bar{1}}$ is a Lie superalgebra $F(X)$ together with a degree zero map $i: X \rightarrow F(X)$ such that if $M$ is any Lie superalgebra, and $ j: X \rightarrow M$ is a degree zero map, then there is a unique Lie superalgebra homomorphism $h: F(X) \rightarrow M$ such that $ j = h \circ i$. The existence of free Lie superalgebra is guaranteed by an analogue of Witt's theorem (see \cite{Musson2012}).
\smallskip

So, if $L$ is a Lie superalgebra generated by a $\mathbb{Z}_{2}$-graded set $X = X_{\bar{0}} \cup X_{\bar{1}}$  and $\phi : X \rightarrow L$ is a degree zero map, then we have free Lie superalgebra $F$ and  $\psi: F \rightarrow L$ extending $\phi$. Let $R = \ker (\psi)$. The extension 
\[0 \longrightarrow R \longrightarrow F \longrightarrow L \longrightarrow 0\] 
is called a {\it free presentation} of $L$ and is denoted by $(F, \psi)$.
\begin{definition}
Given a free presentation,
$$ 0 \longrightarrow R \longrightarrow F \longrightarrow L \longrightarrow 0 $$
 of $L$ with $F$ a free Lie superalgebra, we define Schur multiplier of $L$ as  
$$ 
\mathcal{M}(L) = \frac{[F,F]\cap R}{[F, R]}.
$$ 
\end{definition}
 It is easy to see that the Schur multiplier of a Lie superalgebra $L$ is abelian, and independent of the choice of the free presentation of $L$.

Extensions of Lie superalgebras are studied by several authors \cite{Iohara2001, Neher2003}. An extension of a Lie superalgebra $L$ is a short exact sequence 
\begin{equation}\label{eq1}
\xymatrix{
0 \ar[r] & M\ar[r]^{e} & K\ar[r]^{f} & L \ar[r] & 0}. 
\end{equation}
Since $e: M \longrightarrow e(M) = \ker(f)$ is an isomorphism we will usually identify $M$ and $e(M)$. An extension of $L$ is then same as an epimorphism $f: K \longrightarrow L$. A homomorphism from an extension $f: K \longrightarrow L$ to another extension $\tilde{f}: \tilde{K} \longrightarrow L$ is a Lie superalgebra homomorphism $g: K \longrightarrow K'$ satisfying $f = \tilde{f} \circ g$; in other words, we have the following commutative diagram. 
\begin{center}
$\xymatrix{
K \ar[rd]_{f} \ar[rr]^{g} & & \tilde{K} \ar[ld]^{\tilde{f}} \\
& L &} $ 
\end{center}
A {\it central extension} of $L$ is an extension \eqref{eq1} such that $M=\ker f \subseteq Z(K)$. The central extension is said to be a {\it stem extension} of $L$ if $M \subseteq Z(K)\cap K^{2} $. 
\begin{definition} A  stem extension \eqref{eq1} is  called {\it maximal} if every epimorphism of any other stem extension of $L$ on to $0 \longrightarrow M \longrightarrow  K \longrightarrow   L \longrightarrow 0$ is necessarily an  isomorphism. 
\end{definition}
\begin{definition} Consider the  extension \eqref{eq1} is stem extension. It  is called a {\it stem cover} if $M \cong \mathcal{M}(L)$ and in this case $K$ is said to be a cover of Lie superalgebra $L$.
\end{definition}

 Now we introduce the notation of isoclinism for Lie superalgebras as follows.
\begin{definition}
Let $L$ and $K$ be two Lie superalgebras, $\alpha: \frac{L}{Z(L)}\longrightarrow \frac{K}{Z(K)}$ and $\beta: L^{2} \longrightarrow K^{2}$ be Lie superalgebra homomorphisms such that the following diagram is commutative
\begin{center}
$\xymatrix{
\frac{L}{Z(L)} \times \frac{L}{Z(L)} \ar[d]^{\alpha\times \alpha} \ar[r]^{\quad\quad\phi} &L^{2}\ar[d]^{\beta}\\
\frac{K}{Z(K)} \times \frac{K}{Z(K)} \ar[r]^{\quad\quad\psi}          &K^{2}}$
\end{center}
where $\phi: (\bar{l}, \bar{m}) \longrightarrow [l, m]$ for $ l, m \in L $  and similarly for $\psi: (\bar{r}, \bar{s}) \longrightarrow [r,s]$ for $r, s \in K$. Or, equivalently $\alpha$ and $\beta$ are defined in such a way that they are compatible, i.e., $\beta([l, m]) = [k, r]$, where $l, k, m, r \in L$ in which $k \in \alpha(\bar{l})$ and $ r \in \alpha(\bar{m})$.
Then the pair $(\alpha, \beta)$ is called {\it homoclinism} and if they are both isomorphisms, then $(\alpha, \beta)$ is called {\it isoclinism}.
\end{definition}
 If $(\alpha, \beta)$ is an isoclinism between $L$ and $K$, then $L$ and $K$ are said to be isoclinic, which we denote as  $L\sim K$. Isoclinism is an equivalence relation, and hence it breaks set of all Lie superalgebras into disjoint equivalence classes called isoclinism classes(or, isoclinism families). We show that each isoclinism class contains a special Lie superalgebra called a stem Lie superalgebra( its centre is contained in its derived subalgebra), as it is the case for Lie algebras (see in \cite{Moneyhun1994}).
\smallskip

Unlike the case of groups it has been shown that all covers of a finite dimensional Lie algebras are isomorphic \cite{Moneyhun1994}. Also in \cite{Salemkar2011} it is shown that each perfect Lie algebras, up to isomorphism has a unique cover. But for arbitrary Lie algebras it is an open problem whether the above result holds or not. However Salmekar et. al., \cite{Salemkar2008}, have shown covers are at least isoclinic, for any arbitrary Lie algebra with finite dimensional Schur multiplier. Here we show the same result holds for any arbitrary Lie superalgebra $L$, having finite dimensional multiplier.  

\smallskip

Let $X=X_{0} \cup X_{1}$ be a totally ordered $\mathbb{Z}_{2}$-graded set.  Let $\Gamma(X)$ be the groupoid of non-associative monomials in the alphabet $X$, $u \circ v=(u)(v)$ for $u, v \in \Gamma (X)$, and $S(X)$ be the free semigroup of associative words with the bracket removing homomorphism $-: \Gamma (X)\longrightarrow S(X) $. For $u=x_{1}...x_{n} \in S(X),~x_{i} \in X$, we consider the word length $l_{X}(u)=n$, the multidegree $m(u)$, $|u|=\sum^{n} _{i=1} |x_{i}| \in \mathbb{Z}_{2}$. Now let $K$ be a commutative ring with $1$, and let $A(X)$ and $F(X)$ be the free associative and non-associative $K$-algebras respectively. Let $A(X)_{\sigma}, F(X)_{\sigma}$  for $\sigma \in \mathbb{Z}_{2}$ be te $K$-linear spans of the subsets $S(X)_{\sigma}$ and $\Gamma(X)_{\sigma}$ respectively, $A(X)$ and $F(X)$ being the $\mathbb{Z}_{2}$-graded associative and non-associative algebras respectively. Let $I$ be the ideal generated by the homogeneous elements of the form $x\circ y - (-1)^{|x||y|} y\circ x$ and $(x\circ y)\circ z-x\circ (y \circ z)-(-1)^{|x||y|} y \circ (x\circ z)$, for $x, y \in \Gamma(X)$ then $L(X)=F(X)/I$, is the free Lie $K$-superalgebra(see \cite{YAM2000}).
 \smallskip

Suppose the set $S(X)$ is ordered lexicographically.
A monomial $u \in \Gamma (X)$ is said to be regular if either $u \in X$ or;
\begin{enumerate}
\item $u=u_{1}\circ u_{2}$ where $u_{1},~ u_{2}$ are regular monomials with $\overline{u_{1}}> \overline{u_{2}}$:
\item $u=(u_{1}\circ u_{2})\circ u_{3}$ with $\overline{u_{2}} \leq \overline{u_{3}}$. 
\end{enumerate}

A monomial $u \in \Gamma (X)$ is said to be s-regular if either $u$
 is a regular monomial or $u=(v)(v)$ with $v$ a regular monomial and $|v|=1$. Then the set of all images of the s-regular monomials form a basis of the free Lie superalgebra $L(X)$.  So the basis of homogeneous components either consists of regular words or it also contains squares of regular odd words. Then the dimension of $L(X)$ (the analogue of Witt's formula) is given in Corollary 2.8 in \cite{YAM2000}.  Further Petrogradsky in \cite{Pet2000, Pet2003} suggest a new formula without considering the two cases and found same value.
\smallskip

Let $X=X_{0} \cup X_{1}$, $X_{0}=\{x_{1},...,x_{m} \},~X_{1}=\{y_{m+1},...,y_{m+n} \}$ be a totally ordered $\mathbb{Z}_{2}$-graded set and $L$ be the free Lie superalgebra generated by $X$, $\mu (l)$ the M\"obius function, defined as $\mu(1)=1, \mu(l)=0$ if $l$ is divisible by an square$>1$, and $\mu(p_{1}\cdots p_{s})=(-1)^{s}$ if $p_{1}, \cdots, p_{s}$ are distinct primes. Suppose $L^{n}$ is the $n$-th term of the lower central series of $L$. 

\begin{theorem} [ \cite{Pet2000}\label{Cor1}, p. 221]
$\dim L^{c}= \frac{1}{c}\sum_{a\mid c} \mu(a)(m-(-1)^{a}n)^{\frac{c}{a}}$.
\end {theorem}

\begin{theorem}\label{theorem2}
Utilizing the above notations and assumptions the factor $F^{c}/F^{c+1}$ is an abelian Lie superalgebra of dimension $\frac{1}{c}\sum_{a\mid c} \mu(a)(m-(-1)^{a}n)^{\frac{c}{a}}$, and any element is of the form $[x_{i_{1}}, x_{i_{2}}, \cdots, x_{i_{c}}]$ where $x_{i_{1}}, x_{i_{2}}, \cdots, x_{i_{c}}  \in X$  is precisely a linear combination basic commutator of length $c$.
\end{theorem}

\smallskip

The organisation of the paper is as follows. In Section \ref{shl}, we give some chracterization of isoclisim of Lie superalgebras. Section \ref{mt}, is devoted to study structure of all covers of Lie superalgebras with some condition. In section \ref{st}, we give a necessary and sufficient condition for Lie superalgebras to be stem Lie superalgebras, and further show one more interesting result on isoclinism of Lie superalgebras. Also we prove a converse of Schur's theorem here. Finally in section \ref{ba} we have  generalised Schur's theorem and give a converse.

\section{Some properties of Isoclinism of Lie superalgebras}\label{shl}

Here we discuss some elementary results on isoclinism of Lie superalgebras.  
\begin{lemma}
If $L$ is a Lie superalgebra and $A$ be an abelian Lie superalgebra, then $L\sim L \oplus A$.
\end{lemma}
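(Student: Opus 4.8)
The plan is to exhibit an explicit isoclinism pair $(\alpha,\beta)$ between $L$ and $K := L \oplus A$. The crucial structural observations are that the centre of a direct sum splits as $Z(L\oplus A) = Z(L)\oplus A$ (since $A$ is abelian, every element of $A$ is central in $L\oplus A$, and a pair $(l,a)$ is central iff $l$ is central in $L$), and that the derived subalgebra satisfies $(L\oplus A)' = L'$ (because $[L\oplus A, L\oplus A] = [L,L]$ as $A$ contributes nothing to brackets). These two facts are the heart of the matter; once they are in place the rest is bookkeeping.

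Granting these, I would define $\alpha : L/Z(L) \longrightarrow (L\oplus A)/Z(L\oplus A)$ by $\alpha(l + Z(L)) = (l,0) + Z(L\oplus A)$, and $\beta : L' \longrightarrow (L\oplus A)'$ to be the identity map under the identification $(L\oplus A)' = L'$. Using $Z(L\oplus A) = Z(L)\oplus A$ one checks that $\alpha$ is a well-defined even homomorphism, that it is injective (if $(l,0)\in Z(L)\oplus A$ then $l\in Z(L)$), and that it is surjective (any class $(l,a)+Z(L\oplus A)$ equals $(l,0)+Z(L\oplus A)$ since $(0,a)$ is central), hence an isomorphism; $\beta$ is an isomorphism by construction. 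Both maps are homogeneous of even degree because the decomposition $L\oplus A = (L_{\bar 0}\oplus A_{\bar 0})\oplus(L_{\bar 1}\oplus A_{\bar 1})$ is respected throughout.

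Finally I would verify the compatibility square, i.e. that $\beta\circ\phi_L = \phi_K\circ(\alpha\times\alpha)$ where $\phi_L, \phi_K$ are the commutator maps of $L$ and $K=L\oplus A$ respectively. Chasing an element $(\bar l,\bar m)$: the top-then-right route gives $\beta([l,m]_L) = [l,m]_L$, while the left-then-bottom route gives $\phi_K\big((l,0)+Z(K),\,(m,0)+Z(K)\big) = [(l,0),(m,0)]_K = ([l,m]_L,0) = [l,m]_L$ under the identification. So the diagram commutes and $(\alpha,\beta)$ is an isoclinism, whence $L\sim L\oplus A$.

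I do not anticipate a genuine obstacle here; the only point requiring a little care is confirming $Z(L\oplus A) = Z(L)\oplus A$ in the $\mathbb{Z}_2$-graded setting (one must argue with homogeneous components, using that $[x,a]=0$ for all $x\in L\oplus A$ and $a\in A$), and making sure $\alpha$ is genuinely well defined, i.e. independent of the coset representative. Both are immediate from the definitions of centre and of the direct sum bracket.
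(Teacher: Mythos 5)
Your proposal is correct and follows essentially the same route as the paper: both rest on the identifications $Z(L\oplus A)=Z(L)\oplus A$ and $(L\oplus A)'=L'$, take $\alpha$ to be the induced map on central quotients and $\beta$ the identity on $L'$, and then verify the compatibility square. No gaps; your explicit injectivity/surjectivity check for $\alpha$ is if anything slightly more detailed than the paper's.
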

\begin{proof}
 $A$ is abelian, clearly we have $Z(L \oplus A)= Z(L)\oplus A$. Now
define the map
$\alpha: \frac{L}{Z(L)} \longrightarrow \frac{L\oplus A}{Z(L) \oplus A}$
by
 $x+Z(L) \mapsto x+(Z(L) \oplus A)$,
 for all $x \in L$.
It is easy to check that the map is well defined. Any homogenous element $x$ of the quotient Lie superalgebra $\frac{L}{Z(L)}$ is an element $x \in \frac{L_{\gamma}+Z(L)}{Z(L)}$ for $\gamma \in \mathbb{Z}_{2}$. Also any homogeneous element $y$ of $\frac{L\oplus A}{Z(L) \oplus A}$ is $y \in \frac{L_{\gamma}\oplus A_{\gamma}+Z(L) \oplus A}{Z(L) \oplus A}$ for $\gamma \in \mathbb{Z}_{2}$.
So, $\alpha$ is a homogeneous linear map of degree $0$ and clearly $\alpha$ is a bijection. Define map $\beta: L^{2} \longrightarrow (L\oplus A)^{2}=L^{2}$ is the identity map. Now, the diagram
\begin{center}
$\xymatrix{
\frac{L}{Z(L)} \times \frac{L}{Z(L)} \ar[d]^{\alpha \times \alpha} \ar[r]^{\quad\quad\phi} &L^{2}\ar[d]^{\beta}\\
\frac{L \oplus A}{Z(L)\oplus A} \times \frac{L \oplus A}{Z(L)\oplus A} \ar[r]^{\quad\quad\psi}          &L^{2}}$
\end{center}
is commutative as required.
\end{proof}
\begin{lemma}
Let $L$ be a Lie superalgebra and $H$ be a subalgebra of $L$. Then $H \sim H + Z(L)$. In particular, if $L = H + Z(L)$ then $L \sim H$. Conversely, if $L/Z(L)$ is of dimension $(m \mid n)$  and $L \sim H$ then $L=H+Z(L).$
\end{lemma}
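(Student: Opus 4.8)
The plan is to introduce $N = H + Z(L)$, prove directly that $H \sim N$ by exhibiting an isoclinism $(\alpha,\beta)$, read off the ``in particular'' clause as the special case $N=L$, and then obtain the converse by combining $L\sim H\sim N$ with a super-dimension count. The only genuinely computational input needed is the center of $N$.

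First I would record the two basic facts about $N=H+Z(L)$. Since $Z(L)$ is a central ideal of $L$, one has $N' = [H+Z(L),H+Z(L)] = [H,H] = H'$, so $\beta\colon H'\to N'$ can be taken to be the identity map. For the center, note that $Z(L)\subseteq N$ and $[Z(L),N]\subseteq[Z(L),L]=0$, so $Z(L)\subseteq Z(N)$; and for $h\in H$ we have $[h,N]=[h,H]+[h,Z(L)]=[h,H]$, whence $h\in Z(N)$ iff $h\in Z(H)$, i.e. $Z(N)\cap H = Z(H)$. Then I would define the even map $\alpha\colon H/Z(H)\to N/Z(N)$ by $h+Z(H)\mapsto h+Z(N)$; it is a well-defined Lie superalgebra homomorphism respecting the $\mathbb{Z}_2$-grading (being induced by the inclusion $H\hookrightarrow N$), it is injective because its kernel is $(Z(N)\cap H)/Z(H)=0$, and it is surjective because any $n\in N$ has the form $h+z$ with $h\in H$ and $z\in Z(L)\subseteq Z(N)$, so $n+Z(N)=h+Z(N)$. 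Hence $\alpha$ is an isomorphism. Commutativity of the isoclinism square is immediate since both $\alpha$ and $\beta$ act as the identity on representatives: $\psi\bigl(\alpha(h_1+Z(H)),\alpha(h_2+Z(H))\bigr)=[h_1,h_2]=\beta([h_1,h_2])=\beta\,\phi(h_1+Z(H),h_2+Z(H))$. Thus $(\alpha,\beta)$ is an isoclinism and $H\sim H+Z(L)$; taking $L=H+Z(L)$ gives $L\sim H$.

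For the converse, suppose $\dim(L/Z(L))=(m\mid n)$ and $L\sim H$. By the first part $H\sim N$ with $N=H+Z(L)$, and since isoclinism is an equivalence relation, $L\sim N$; the even isomorphism $L/Z(L)\to N/Z(N)$ furnished by the isoclinism then gives $\dim(N/Z(N))=\dim(L/Z(L))=(m\mid n)$. Now $Z(L)\subseteq Z(N)\subseteq N\subseteq L$ with $Z(L)\subseteq N$, so there is a surjective even linear map $N/Z(L)\twoheadrightarrow N/Z(N)$ and an injective even linear map $N/Z(L)\hookrightarrow L/Z(L)$ induced by the inclusion. Comparing super-dimensions componentwise, $(m\mid n)=\dim(N/Z(N))\le\dim(N/Z(L))\le\dim(L/Z(L))=(m\mid n)$, which forces $\dim(N/Z(L))=\dim(L/Z(L))$; as this is finite, the inclusion $N/Z(L)\hookrightarrow L/Z(L)$ is onto, i.e. $N+Z(L)=L$, so $L=N=H+Z(L)$.

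I expect the write-up to be essentially bookkeeping: the well-definedness, even-degree property, and bijectivity of $\alpha$ are routine once $Z(N)\cap H=Z(H)$ is in hand. The one point that needs a little care is the converse, where all maps in the dimension chase must be of even degree so that the super-dimension inequalities can be read off in each graded piece; I do not anticipate a real obstacle beyond this.
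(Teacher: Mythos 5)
Your proposal is correct and follows essentially the same route as the paper: both identify $Z(H+Z(L))=Z(H)+Z(L)$ (you phrase this as $Z(L)\subseteq Z(N)$ together with $Z(N)\cap H=Z(H)$), take $\beta$ to be the identity on $H'=(H+Z(L))'$, let $\alpha$ be induced by the inclusion, and prove the converse by the same dimension chain $(m\mid n)\le\dim\bigl(N/Z(L)\bigr)\le\dim\bigl(L/Z(L)\bigr)=(m\mid n)$. The only difference is cosmetic --- the paper runs the converse as a proof by contradiction on $\dim K$ while you argue directly that the injection $N/Z(L)\hookrightarrow L/Z(L)$ must be onto --- and your write-up is, if anything, more explicit about well-definedness and bijectivity of $\alpha$.
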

\begin{proof}
Define the map
$\alpha: \frac{H}{Z(H)} \longrightarrow \frac{H+Z(L)}{Z(H)+ Z(L)}$ by
 $x+Z(H) \mapsto x+(Z(H)+ Z(L))$ for all $x \in L$ and $\beta: H^{2} \longrightarrow H^{2}$ is the identity map. Evidently, $\alpha$ and $\beta$ are Lie superalgebra isomorphisms such that the diagram
\begin{center}
$\xymatrix{
\frac{H}{Z(H)} \times \frac{H}{Z(H)} \ar[d]^{\alpha \times \alpha} \ar[r]^{\quad\quad\phi} &H^{2}\ar[d]^{\beta}\\
\frac{H+Z(L)}{Z(H)+Z(L)} \times \frac{H+Z(L)}{Z(H)+Z(L)} \ar[r]^{\quad\quad\psi}          &H^{2}}$
\end{center}
is commutative.
Hence $H \sim H+Z(L)$ and if $L=H+Z(L)$ then $H \sim L$.

\bigskip

Conversely suppose $K= H+Z(L)$ then $K \subseteq L$ is a subalgebra of $L$. Let $K$ be a proper subalgebra. Since $L \sim H$ and also $H \sim H+Z(L)=K$, so
$L \sim K$. Thus we have $\frac{L}{Z(L)} \cong\frac{K}{Z(K)}$ and hence the quotient $\frac{K}{Z(K)}$  is of dimension $(m \mid n)$. Since $Z(L) \subseteq Z(K)$, so $\dim\left(\frac{K}{Z(K)}\right)\leq \dim\left(\frac{K}{Z(L)}\right)$.  Say $\dim Z(L)=(r \mid s)$. We get
 $(m \mid n)\leq \dim(\frac{K}{Z(L)}) \leq \dim(\frac{L}{Z(L)})=(m \mid n)$ implies
$\dim\left(\frac{K}{Z(L)}\right)=(m \mid n)$. Therefore, $\dim K=(m+r | n+s)=\dim L$ which is a contradiction to our assumption that $K$ is proper. Hence $K = L$ as required.
\end{proof}

\begin{lemma}\label{lemma3}
Let $L$ be a Lie superalgebra and $I$ be a graded ideal. Then 
 $L/I \sim L/(I \cap L^{2})$. In particular, if $I \cap L^{2} =\{ 0\}$ then $L \sim L/I$. Conversely, if $L^{2}$ is finite dimensional and $L \sim L/I$ then $ I \cap L^{2}=\{0\}.$
\end{lemma}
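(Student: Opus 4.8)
\emph{Proof proposal.} The plan is to establish the special case ``if $I\cap L'=0$ then $L\sim L/I$'' first, and then bootstrap the general isoclinism $L/I\sim L/(I\cap L')$ from it; the converse will follow from a dimension count. So assume first that $I\cap L'=0$. The initial observation is that $I$ must then be central, since $[I,L]\subseteq I\cap L'=0$, hence $I\subseteq Z(L)$. I would produce the isoclinism $(\alpha,\beta)$ directly from the canonical projection $\pi\colon L\to L/I$. For $\beta$, take the restriction $\pi|_{L'}\colon L'\to (L/I)'=(L'+I)/I$; its kernel is $L'\cap I=0$, so $\beta$ is an isomorphism. For $\alpha$, the key computation is to identify the centre of $L/I$: if $x+I\in Z(L/I)$ then $[x,L]\subseteq I$, and since also $[x,L]\subseteq L'$ we get $[x,L]\subseteq I\cap L'=0$, so $x\in Z(L)$; combined with $I\subseteq Z(L)$ this yields $Z(L/I)=Z(L)/I$. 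The third isomorphism theorem then supplies a canonical isomorphism $\alpha\colon L/Z(L)\to (L/I)/(Z(L)/I)=(L/I)/Z(L/I)$. Both $\alpha$ and $\beta$ are homogeneous of degree zero, and the square in the definition of isoclinism commutes because the bracket is computed componentwise: $\beta([x,y])=[x,y]+I=[x+I,y+I]$. This proves $L\sim L/I$.

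For the general assertion I would set $J=I\cap L'$ and pass to $\bar L=L/J$. Since $J\subseteq L'$ one has $\bar L'=L'/J$, so the graded ideal $\bar I:=I/J$ of $\bar L$ satisfies $\bar I\cap\bar L'=(I\cap L')/J=0$. Applying the case just proved to $\bar L$ and $\bar I$ gives $\bar L\sim\bar L/\bar I$; since $\bar L=L/(I\cap L')$ and $\bar L/\bar I\cong L/I$ by the third isomorphism theorem, this is precisely $L/(I\cap L')\sim L/I$.

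For the converse, suppose $L'$ is finite dimensional and $L\sim L/I$. The isoclinism in particular supplies an isomorphism $\beta\colon L'\to (L/I)'$ of Lie superalgebras, and $(L/I)'=(L'+I)/I\cong L'/(I\cap L')$; comparing superdimensions, which are finite, forces $\dim(I\cap L')=(0\mid 0)$, i.e.\ $I\cap L'=0$.

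I expect the only genuinely delicate point to be the identification $Z(L/I)=Z(L)/I$: this is exactly where the hypothesis $I\cap L'=0$ gets used (first to make $I$ central, then to pin down the preimage of $Z(L/I)$), and it fails for an arbitrary central ideal, so the ``in particular'' clause really is essential. Everything else --- well-definedness of $\alpha$ and $\beta$, their $\mathbb{Z}_2$-degree, commutativity of the defining diagram, and the dimension count in the converse --- is routine.
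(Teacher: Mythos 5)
Your proof is correct, and it reaches the conclusion by a different decomposition of the argument than the paper. The paper constructs the isoclinism in one shot between $\bar L = L/I$ and $\tilde L = L/(I\cap L')$, defining $\alpha(\bar l + Z(\bar L)) = \tilde l + Z(\tilde L)$ and proving as its key step that $\bar l\in Z(\bar L)$ if and only if $\tilde l\in Z(\tilde L)$ --- which rests on exactly the computation you use ($[l,L]\subseteq I$ together with $[l,L]\subseteq L'$ forces $[l,L]\subseteq I\cap L'$). You instead isolate the special case $I\cap L'=0$, where that computation gives the clean identifications $I\subseteq Z(L)$ and $Z(L/I)=Z(L)/I$, and then recover the general statement by passing to $\bar L=L/(I\cap L')$ and applying the special case to $\bar I=I/(I\cap L')$, which satisfies $\bar I\cap\bar L'=0$; the third isomorphism theorem converts $\bar L\sim\bar L/\bar I$ into $L/(I\cap L')\sim L/I$. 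Your route makes the structure of the two centres more transparent and keeps the well-definedness checks trivial (everything is induced by canonical projections), at the cost of the extra quotient bookkeeping; the paper's route avoids the reduction but has to verify well-definedness and bijectivity of $\alpha$ by hand via the centrality claim. Your converse is essentially the paper's: an isoclinism forces $L'\cong (L/I)'\cong L'/(I\cap L')$, and finite dimensionality of $L'$ then gives $I\cap L'=0$.
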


\begin{proof}
 Let us denote $\bar{L}:= \frac{L}{I}$ and $\tilde{L} := \frac{L}{I\cap L^{2}}$. Define map $\alpha$ as follows,
\begin{align*} 
\alpha: \frac{\bar{L}}{Z(\bar{L})} &\longrightarrow \frac{\tilde{L}}{Z(\tilde{L})}\\
\bar{l}+Z(\bar{L})& \mapsto \tilde{l}+ Z(\tilde{L})
\end{align*}
where $\bar{l} \in\bar{L}$, $\tilde{l} \in \tilde{L}$ with $l \in L$.  At first we show  $\bar{l} \in Z(\bar{L})$ if and only if $\tilde{l}\in Z(\tilde{L})$. Consider, $\bar{l} \in Z(\bar{L})$ then we have $[\bar{l}, \bar{L}]=0$ i.e., $ [l, L] \in I$ and also $[l, L] \in L^{2}$ implies $[l, L] \in I \cap L^{2}$. Now for $x \in L$
\begin{align*}
[\tilde{l}, \tilde{L}] &= [l+L^2 \cap I, x+ I\cap L^2]\\
&= [l, x]+ I \cap L^2
= 0
\end{align*}
which shows $\tilde{l} \in Z(\tilde{L})$ and conversely. Let us take $\bar{l}+ Z(\bar{L})=\bar{m}+Z(\bar{L})$ where $\bar{l}, \bar{m} \in \bar{L}$. Then $(\bar{l}-\bar{m}) \in Z(\bar{L})$  which implies $(\tilde{l}-\tilde{m})  \in Z(\tilde{L})$. Thus  $\alpha$ is well defined. Also $\alpha$ is a Lie superalgebra isomorphism.
\smallskip

Define the map $\beta: \bar{L}^{2} \longrightarrow \tilde{L}^{2} $ as $\beta(\bar{l})=\tilde{l}$ where $\bar{l} \in \bar{L}^{2}, \tilde{l} \in \tilde{L}^{2}$ with $l \in L^{2}$. Clearly $\beta$ is well defined and a bijection. For $\bar{l}_{1},  \bar{l}_{2} \in \bar{L}^{2}$  we have $\beta([\bar{l}_{1}, \bar{l}_{2}]) = \beta([l_{1}, l_{2}] + I)= [l_{1} + (I\cap L^{2}), l_{2} + (I\cap L^{2})] =[\beta(\bar{l}_{1}), \beta(\bar{l}_{2})]$, shows that $\beta$ is an isomorphism. Finally we have for $\bar{l}, \bar{m} \in \bar{L}$ 
\begin{align*}
 \beta([\bar{l}, \bar{m}])
&= [\tilde{l}, \tilde{m}]\\
&= [\alpha(\bar{l}+Z(\bar{L})), \alpha(\bar{m}+Z(\bar{L}))],
\end{align*}
so the pair $(\alpha, \beta)$ is an isoclinism and $L/I \sim L/(I \cap L^{2})$.
 If $I \cap L^{2}=\{0\}$ then $L \sim \frac{L}{I}$.
 
 \bigskip
 
Conversely, let $\dim L^{2}=(m \mid n)$ and as $L \sim \frac{L}{I }$, we have the isomorphism, $L^{2} \cong \frac{L^{2}}{I \cap L^{2}}$. Hence $I \cap L^{2}=\{ 0\}$, as required.
\end{proof}

\begin{corollary} \label{corollary1}
Let $L$ and $K$ be two Lie superalgebras.
If $f: L \longrightarrow K$ is an onto homomorphism, then $f$ induces a isoclinism between $L$ and $K$  if and only if $ \ker f \cap L^{2}=\{0\}$. 
\end{corollary}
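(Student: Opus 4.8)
The plan is to reduce everything to a quotient map and then simply read off the two induced maps. Put $I=\ker f$. Since $f$ is surjective, the first isomorphism theorem yields an isomorphism $L/I\xrightarrow{\ \sim\ }K$ which identifies $f$ with the canonical projection $\pi\colon L\to L/I$; so it suffices to decide when $\pi$ induces an isoclinism. First I would record that $\pi(Z(L))\subseteq Z(L/I)$ (immediate from surjectivity), so $\pi$ induces a surjective homomorphism $\bar\pi\colon L/Z(L)\to (L/I)/Z(L/I)$, and the restriction $\pi'\colon L'\to (L/I)'$ is a surjective homomorphism as well; the square in the definition of homoclinism commutes automatically because both vertical maps come from the single map $\pi$. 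Hence the assertion that $f$ induces an isoclinism is exactly the assertion that $\bar\pi$ and $\pi'$ are both isomorphisms, i.e.\ both injective.

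Next I would compute the two kernels. As $\pi'$ is the restriction of $\pi$ to $L'$, one has $\ker\pi'=I\cap L'$, so $\pi'$ is injective iff $I\cap L'=0$. This already gives the forward implication: if $f$ induces an isoclinism then in particular $\pi'$ is an isomorphism, whence $I\cap L'=\ker\pi'=0$. For the converse, assume $I\cap L'=0$. Then $\pi'$ is injective by the previous sentence. For $\bar\pi$, suppose $l+Z(L)\in\ker\bar\pi$; this means $[l,L]\subseteq I$. But $[l,L]\subseteq L'$ always holds, so $[l,L]\subseteq I\cap L'=0$, i.e.\ $l\in Z(L)$, and $\bar\pi$ is injective. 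Thus $(\bar\pi,\pi')$ is an isoclinism.

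The only step that is not pure bookkeeping is the injectivity of $\bar\pi$ under the hypothesis $I\cap L'=0$, and it rests on the elementary observation that every bracket $[l,m]$ lies in $L'$, so once it also lies in $I$ it must vanish; this is exactly what lets us dispense with any finiteness assumption here (contrast the converse part of Lemma \ref{lemma3}, where the isoclinism is not assumed to be the canonical one and so $\dim L'<\infty$ is needed). Alternatively, one can deduce the corollary by transporting the isoclinism constructed in Lemma \ref{lemma3} along the isomorphism $L/\ker f\cong K$, but the direct argument above is cleaner and marginally more general.
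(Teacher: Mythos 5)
Your proof is correct and is essentially the paper's argument: the direction ``isoclinism $\Rightarrow \ker f\cap L'=0$'' is exactly the paper's observation that the induced map $f|_{L'}\colon L'\to K'$ must be injective, and your direct verification of the other direction simply inlines the computation that the paper obtains by citing Lemma \ref{lemma3} (applied to $I=\ker f$) together with the isomorphism $L/\ker f\cong K$. Your side remark that no finiteness hypothesis is needed here --- because the isoclinism in question is the canonical one induced by $f$, in contrast to the converse part of Lemma \ref{lemma3} --- is also accurate.
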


\begin{proof}
Consider the Lie superalgebra $L= L_{\bar{0}}\oplus L_{\bar{1}}$ and let $f$ be an onto homomorphism such that $\ker f \cap L^{2}=\{0\}$. Here $ \ker f= L_{\bar{0}} \cap \ker f\oplus L_{\bar{1}} \cap \ker f$ is a graded ideal of $L$. Using Lemma \ref{lemma3} we have $\frac{L}{\ker f} \sim L$.
 Again $\frac{L}{\ker f} \cong K$, so $L \sim K$. Conversely, if $f$ induces an isoclinism between $L$ and $K$, then $f \mid _{L^{2}}: L^{2}\longrightarrow K^{2}$ is an isomorphism. Hence, $\ker f \cap L^{2}= \{0\}$.
\end{proof}

\section{Covers of Lie superalgebras} \label{mt}

In this section at first we show that stem cover do exist for every Lie superalgebras. Further, we show that for any given Lie superalgebra with finite dimensional Schur multiplier, all of its covers are isoclinic.

\begin{lemma} \label{lemma4}
Stem covers exist for each Lie superalgebras.
\end{lemma}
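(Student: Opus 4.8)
The plan is to construct a stem cover directly from a free presentation of $L$, mimicking the classical group-theoretic construction of Schur and its Lie-algebra analogue. Fix a free presentation $0 \to R \to F \to L \to 0$ with $F$ free. Since $[F,R]$ is a graded ideal contained in $R\cap[F,F]$, and the Schur multiplier $\mathcal{M}(L) = (R\cap[F,F])/[F,R]$ is a central object, I would first pass to $\bar F := F/[F,R]$. In $\bar F$ the image $\bar R$ of $R$ is central (because $[F,R]$ has been killed), so $0 \to \bar R \to \bar F \to L \to 0$ is a central extension. The submodule $\bar R$ decomposes — since everything is $\mathbb{Z}_2$-graded and we are over a field — as $\bar R = (\bar R \cap \bar F') \oplus C$ for some graded complement $C$, where $\bar R \cap \bar F' = (R\cap[F,F])/[F,R] = \mathcal{M}(L)$. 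The claim is that $K := \bar F / C$ together with the induced surjection onto $L$ is the desired stem cover.

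The key steps, in order, are: (1) verify $[F,R] \subseteq R \cap [F,F]$ and that the quotient $\bar F = F/[F,R]$ makes $\bar R$ central, so we genuinely have a central extension; (2) split $\bar R$ as a graded vector space into $\mathcal{M}(L) \oplus C$ with $C$ a graded subspace — here it is important that $C$, being a direct summand of a \emph{central} subspace, is automatically a graded ideal of $\bar F$, so the quotient $K = \bar F/C$ is a well-defined Lie superalgebra; (3) identify $\ker(K \to L)$ with $\bar R / C \cong \mathcal{M}(L)$, establishing that the extension $0 \to \mathcal{M}(L) \to K \to L \to 0$ has the right kernel; (4) check centrality: $\ker(K\to L)$ is the image of the central subspace $\bar R$, hence central in $K$; (5) check the stem condition $\ker(K\to L) \subseteq K'$, i.e. that $\bar R/C \subseteq \bar F'/C$ — but $\bar R/C$ is exactly the image of $\bar R \cap \bar F' = \mathcal{M}(L)$, so it lies in $\bar F' + C$ modulo $C$, which is $(\bar F/C)' = K'$. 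Together, (3)–(5) say $K$ is a stem cover and $K$ is a cover of $L$.

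Throughout, the grading bookkeeping has to be carried along: every ideal and subspace in sight ($R$, $[F,R]$, $R\cap[F,F]$, $\bar R$, $C$) must be shown to be $\mathbb{Z}_2$-graded, but this is routine since $F$ is graded, the bracket respects the grading, and intersections and sums of graded subspaces are graded; choosing the complement $C$ can be done degree by degree. The superdimension statement $\dim\mathcal{M}(L) = (m\mid n)$ plays no role in mere existence, so I would not invoke it here.

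The main obstacle I anticipate is step (2)–(3): one must be careful that $C$ is not just a vector-space complement but actually an \emph{ideal} of $\bar F$, which requires genuinely using that $\bar R$ is central in $\bar F$ (so $[\bar F, C] \subseteq [\bar F, \bar R] = 0 \subseteq C$); and then one must confirm that quotienting by $C$ does not accidentally shrink the kernel further or destroy surjectivity onto $L$ — i.e. that $C \cap (\text{preimage of } 0 \text{ in } \bar F \text{ hitting } L) = C$ works out so that $\ker(\bar F/C \to L) = \bar R/C$ exactly. All of this is bookkeeping once centrality is in hand, but it is the place where a careless argument would go wrong. A secondary subtlety is that a free Lie superalgebra and its relevant ideals behave as in the Lie algebra case — I would cite the analogue of Witt's theorem (already invoked in the excerpt) to justify that $F/[F,R]$ and its subalgebras are honest Lie superalgebras with the expected graded structure.
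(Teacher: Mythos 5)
Your construction is exactly the one in the paper: pass to $F/[R,F]$, split $R/[R,F]$ as $\mathcal{M}(L)\oplus C$ with $C$ a graded complement (the paper's $S/[R,F]$), and take $K=F/S$; your verifications of centrality, of $\ker(K\to L)\cong\mathcal{M}(L)$, and of the stem condition via $(F'\cap R+S)/S\subseteq(F/S)'$ all match the paper's argument. The proposal is correct and takes essentially the same approach.
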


\begin{proof}
Let $L = L_{\bar{0}} \oplus L_{\bar{1}}$ be a Lie superalgebra and let 
$\xymatrix{
0 \ar[r] & R\ar[r] & F \ar[r]^{\pi} & L \ar[r] & 0}$ be a free presentation of $L$. Clearly $\frac{R}{[R,F]}$ is a central graded ideal of $\frac{F}{[R,F]}$, \\ so
$\xymatrix{
0 \ar[r] & \frac{R}{[R, F]}\ar[r] & \frac{F}{[R, F]}\ar[r]^{\bar{\pi}} & L \ar[r] & 0}$ is a central extension of $L$. Let us consider $\frac{S}{[R, F]}$ be a complementary $\mathbb{Z}_{2}$-graded subspace of $\mathcal{M}(L)$ in $\frac{R}{[R, F]}$ for some graded ideal $S$ in $F$. Set $K := \frac{F}{S}$ and $M := \frac{R}{S}$ and hence $\frac{K}{M}= \frac{F/S}{R/S}\cong \frac{F}{R}$ implying $\frac{K}{M} \cong L$. Now
\begin{align*}
M= \frac{R}{S} \cong \frac{R/[R, F]}{S/[R, F]}
&= \frac{\mathcal{M}(L)\oplus S/[R, F]}{S/[R, F]}\\
& \cong \mathcal{M}(L).
\end{align*}
Again $M = \frac{R}{S} \subseteq \frac{F^{2}\cap R +S}{S} \subseteq \frac{F^{2}+S}{S}= \left(\frac{F}{S}\right)^{2}= K^{2}$ and also $M=\frac{R}{S} \subseteq Z\left(\frac{F}{S}\right)=Z(K)$, i.e. $M \subseteq Z(K) \cap K^{2}$. Hence, $0 \longrightarrow M \longrightarrow K \longrightarrow L \longrightarrow 0$ is the required stem cover we are looking for.
\end{proof}
The following lemma plays an essential role in our investigations.
\begin{lemma}\label{lem5}
Let $\xymatrix{
0 \ar[r] & R\ar[r] & F\ar[r]^{\pi} & L \ar[r] & 0}$ be a free presentation of a Lie superalgebra $L$ and let $\xymatrix{
0 \ar[r] & M\ar[r] & K\ar[r]^{\theta} & \tilde{L} \ar[r] & 0}$ be a central extension of another Lie superalgebra $\tilde{L}$. Then for each homomorphism $\alpha: L \longrightarrow \tilde{L}$, there exists a homomorphism $\beta: F/[R, F]\longrightarrow K$ such that $\beta(R/[R, F])\subseteq M$ and the following diagram is commutative:
\begin{equation}\label{eq3}
\xymatrix{
0 \ar[r] & \frac{R}{[R, F]}\ar[d]^{\beta_{1}}\ar[r] & \frac{F}{[R, F]}\ar[d]^{\beta}\ar[r]^{\bar{\pi}} & L\ar[d]^{\alpha} \ar[r] & 0\\
0 \ar[r] & M\ar[r] & K\ar[r]^{\theta} & \tilde{L} \ar[r] & 0.}
\end{equation}
\end{lemma}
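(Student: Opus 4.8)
The plan is to exploit the freeness of $F$. Write $F = F(X)$ for a $\mathbb{Z}_2$-graded generating set $X = X_{\bar 0} \cup X_{\bar 1}$, so that $\pi\colon F \to L$ is the associated epimorphism with kernel $R$. For each $x \in X$ the element $\alpha(\pi(x))$ lies in $\bar L$; since $\theta\colon K \to \bar L$ is a degree-zero epimorphism, it restricts to a surjection $K_{\gamma} \to \bar L_{\gamma}$ for each $\gamma \in \mathbb{Z}_2$, so we may choose a homogeneous element $k_x \in K$ with $|k_x| = |x|$ and $\theta(k_x) = \alpha(\pi(x))$. The assignment $x \mapsto k_x$ is then a degree-zero map $X \to K$, and by the universal property of the free Lie superalgebra it extends uniquely to a Lie superalgebra homomorphism $\beta_0\colon F \to K$.

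Next I would verify that $\theta \circ \beta_0 = \alpha \circ \pi$. Both sides are Lie superalgebra homomorphisms $F \to \bar L$ and they agree on the generating set $X$ by the choice of the $k_x$; since $X$ generates $F$, they coincide. In particular, for $r \in R$ we get $\theta(\beta_0(r)) = \alpha(\pi(r)) = 0$, so $\beta_0(R) \subseteq \ker\theta = M$. Now $\beta_0([R,F]) = [\beta_0(R), \beta_0(F)] \subseteq [M,K] = 0$, the last equality because the bottom row is a \emph{central} extension, i.e. $M \subseteq Z(K)$. Hence $[R,F] \subseteq \ker\beta_0$, and $\beta_0$ factors through the quotient, producing a homomorphism $\beta\colon F/[R,F] \to K$ with $\beta\big(u + [R,F]\big) = \beta_0(u)$, which is again of degree zero. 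By the preceding observation $\beta(R/[R,F]) = \beta_0(R) \subseteq M$, and we define $\beta_1\colon R/[R,F] \to M$ to be the corestriction of $\beta$.

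For the commutativity of \eqref{eq3}: the left-hand square commutes because $\beta_1$ is by construction the restriction–corestriction of $\beta$ along the inclusions, and the right-hand square is the identity $\theta \circ \beta = \alpha \circ \bar\pi$, which follows from $\theta \circ \beta_0 = \alpha \circ \pi$ together with the fact that $\bar\pi$ composed with the canonical projection $F \to F/[R,F]$ equals $\pi$. I do not anticipate a serious obstacle; the only points needing a little care are the homogeneity of the chosen preimages $k_x$ (hence the degree-zero property of $\beta$) and the use of the centrality $M \subseteq Z(K)$ to push $\beta_0$ down past $[R,F]$ — the latter is exactly why the hypothesis of a central extension is indispensable.
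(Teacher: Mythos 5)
Your proof is correct and follows essentially the same route as the paper: use the freeness (projectivity) of $F$ to lift $\alpha\circ\pi$ through the epimorphism $\theta$ to a homomorphism $F\to K$, observe that $R$ maps into $M=\ker\theta$, and use the centrality $M\subseteq Z(K)$ to conclude that $[R,F]$ dies, so the map descends to $F/[R,F]$. Your version is in fact slightly more careful than the paper's, since you make the lift explicit by choosing homogeneous degree-preserving preimages of the generators (the paper's claim that the lift $\beta'$ is \emph{unique} is not needed and is not actually true).
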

\begin{proof} 
We have 
$\xymatrix{
0 \ar[r] & \frac{R}{[R, F]}\ar[r] & \frac{F}{[R, F]}\ar[r]^{\bar{\pi}} & L \ar[r] & 0}$ is a central extension of $L$.
As $F$ is free, there is a unique Lie superalgebra homomorphism $ F \longrightarrow \tilde{L}$. But since we have
\begin{center}
$\xymatrix{
F  \ar[r]^{\pi} &L\ar[d]^{\alpha}\\
K \ar[r]^{\theta}          &\tilde{L},}$
\end{center}
so there must exists an unique Lie superalgebra homomorphism $\beta': F \longrightarrow K$
 such that the following commutes:
\begin{center}
$\xymatrix{
F \ar[d]^{\beta'} \ar[r]^{\pi} &L\ar[d]^{\alpha}\\
K \ar[r]^{\theta}          &\tilde{L}}$
\end{center}
i.e., $\alpha \circ \pi= \theta \circ \beta^{'}$.
Our claim is $\beta'$ induces the homomorphism $\beta$. We have $R= \ker \pi$ is a graded ideal, now 
 let $x \in \beta'(R_{\gamma})$, so $x= \beta'(r)$ for $r \in R_{\gamma}$ for $\gamma \in \mathbb{Z}_{2}$. Since $M = \ker \theta$ is an graded ideal of $K$, take
$\theta(x)= \theta(\beta'(r))= \alpha(\pi(r))=0$ which implies $x \in M_{\gamma}$, i.e., $\beta'(R_{\gamma}) \subseteq M_{\gamma}$. Consider $[x, y]$ for $y \in F_{\delta} $ and $x \in R_{\gamma}$ with $\delta, \gamma \in \mathbb{Z}_2$, then $\beta'([x, y])= [\beta'(x), \beta'(y)]$. But we have $\beta'(x) \in M_{\gamma}$ and $M_{\gamma} \in Z(K)_{\gamma}$, implies 
$\beta'([x, y])=0$. We get $\beta'([R, F])=0 $, hence $\beta'$ induces the homomorphism $\beta$ as required. Also $\beta(\frac{R}{[R, F]}) \subseteq M $ implies $\beta_{1}= \beta|_{\frac{R}{[R, F]}}$ is the restriction map, hence a homomorphism such that the diagram in (\ref{eq3}) commutes.  
\end{proof}

 Following result is useful in determining structure of covers of Lie superalgebras having finite dimensional Schur multipliers.

\begin{theorem}\label{theorem1}
Let $L$ be a Lie superalgebra such that its Schur multiplier is of dimension $(m \mid n)$ and $\xymatrix{
0 \ar[r] & R\ar[r] & F\ar[r]^{\pi} & L \ar[r] & 0}$ be a free presentation of $L$. Then the extension $\xymatrix{
0 \ar[r] & M\ar[r] & K \ar[r]^{\psi} & L \ar[r] & 0}$ is a stem cover of $L$ if and only if there exists a graded ideal $S$ in $F$ such that:
\begin{enumerate}
\item $ K \cong \frac{F}{S}$ and $M \cong \frac{R}{S}$\\
\item $\frac{R}{[R, F]}= \mathcal{M}(L)\oplus \frac{S}{[R, F]}$.
\end{enumerate}
\end{theorem}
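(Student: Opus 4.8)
The plan is to prove the two implications separately. The reverse implication is essentially the computation already carried out in the proof of Lemma~\ref{lemma4}, so I would be brief: given a graded ideal $S$ of $F$ satisfying (1) and (2), condition (2) forces $[R,F]\subseteq S\subseteq R$, so $F/S$ and $R/S$ make sense and $(F/S)/(R/S)\cong F/R\cong L$, yielding an extension $0\to M\to K\to L\to 0$. I would then check the three defining properties of a stem cover directly: $M\cong R/S\cong (R/[R,F])/(S/[R,F])\cong\mathcal{M}(L)$ by (2); the image of $R$ in $F/S$ is central since $[R,F]\subseteq S$ gives $[R/S,F/S]=([R,F]+S)/S=0$; and it lies in $(F/S)'=(F'+S)/S$ because (2) gives $R=(F'\cap R)+S\subseteq F'+S$. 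Hence $M\subseteq Z(K)\cap K'$ with $M\cong\mathcal{M}(L)$, i.e. the extension is a stem cover.

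For the ``only if'' direction, assume $0\to M\to K\xrightarrow{\psi}L\to 0$ is a stem cover. I would apply Lemma~\ref{lem5} with $\bar L=L$, $\alpha=\mathrm{id}_L$ and this central extension, obtaining a homomorphism $\beta\colon F/[R,F]\to K$ with $\beta(R/[R,F])\subseteq M$, with $\psi\circ\beta=\bar\pi$, and with restriction $\beta_1\colon R/[R,F]\to M$. The first key step is to show $\beta$ is onto: since $\psi\circ\beta=\bar\pi$ is onto, $\operatorname{im}\beta+M=K$, and because $M\subseteq Z(K)$ all mixed brackets vanish, so $K'=[\operatorname{im}\beta+M,\operatorname{im}\beta+M]=[\operatorname{im}\beta,\operatorname{im}\beta]\subseteq\operatorname{im}\beta$; the stem condition $M\subseteq K'$ then gives $M\subseteq\operatorname{im}\beta$, whence $\operatorname{im}\beta=\operatorname{im}\beta+M=K$. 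A short diagram chase shows $\beta_1$ is onto as well, and that $\ker\beta\subseteq R/[R,F]$ (if $\beta(\bar f)=0$ then $\bar\pi(\bar f)=\psi(\beta(\bar f))=0$, so $\bar f\in\ker\bar\pi$).

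Next I would produce $S$ and verify (1) and (2). Let $S\supseteq[R,F]$ be the graded ideal of $F$ with $S/[R,F]=\ker\beta$; by the previous observation $S\subseteq R$. Then $K\cong (F/[R,F])/\ker\beta\cong F/S$, and since $\ker\beta_1=\ker\beta\cap(R/[R,F])=S/[R,F]$ we also get $M\cong (R/[R,F])/(S/[R,F])\cong R/S$, which is (1). For (2): transporting $M\subseteq K'$ across $K\cong F/S$ yields $R\subseteq F'+S$, hence $R=(F'\cap R)+S$ by the modular law, that is, $R/[R,F]=\mathcal{M}(L)+S/[R,F]$. To see the sum is direct, note that the induced surjection $\mathcal{M}(L)=(F'\cap R)/[R,F]\twoheadrightarrow R/S\cong M$ has kernel $(F'\cap S)/[R,F]=\mathcal{M}(L)\cap(S/[R,F])$; comparing dimensions and using $\dim M=\dim\mathcal{M}(L)=(m\mid n)$ (finite) forces this kernel to be $0$, so $R/[R,F]=\mathcal{M}(L)\oplus S/[R,F]$.

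I expect the only genuinely delicate point to be the surjectivity of $\beta$ (and hence of $\beta_1$), where \emph{both} halves of the stem hypothesis $M\subseteq Z(K)\cap K'$ are needed; everything else is bookkeeping with the commutative diagram of Lemma~\ref{lem5} and the isomorphism theorems. The finiteness assumption $\dim\mathcal{M}(L)=(m\mid n)$ enters exactly once, in the dimension count that establishes the directness of the sum in (2).
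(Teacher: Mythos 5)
Your proposal is correct and follows essentially the same route as the paper: Lemma~\ref{lem5} produces $\beta$, the stem condition $M\subseteq Z(K)\cap K'$ forces $\beta$ onto, $S$ is defined via $\ker\beta$, and the finiteness of $\dim\mathcal{M}(L)$ enters only to make the sum in (2) direct. Your dimension count for directness (a surjection $\mathcal{M}(L)\twoheadrightarrow M$ between spaces of equal finite dimension has zero kernel) is in fact a cleaner version of the paper's somewhat garbled ``either $S/[R,F]\subseteq\mathcal{M}(L)$ or the intersection is zero'' dichotomy, but the underlying idea is the same.
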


\begin{proof}
The same  proof as \cite[Theorem 2.2]{Salemkar2008} will work.
\end{proof}
Following are some important consequences of Theorem \ref{theorem1}.
\begin{corollary}
Let $L$ be a Lie superalgebra such that its Schur multiplier is of dimension $(m \mid n)$. Then all covers are isoclinic.
\end{corollary}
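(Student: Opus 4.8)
The plan is to reduce everything to the single central extension $T := F/[R,F]$ attached to a fixed free presentation $0 \to R \to F \to L \to 0$ of $L$, and to show that \emph{every} cover of $L$ is isoclinic to $T$; since $\sim$ is an equivalence relation, any two covers will then be isoclinic to each other. So I would begin by fixing such a presentation and taking an arbitrary cover $0 \to M \to K \to L \to 0$. By Theorem \ref{theorem1} there is a graded ideal $S$ of $F$ with $[R,F] \subseteq S \subseteq R$ such that $K \cong F/S$ and $R/[R,F] = \mathcal{M}(L) \oplus (S/[R,F])$.

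The core step is to check that $\bar S := S/[R,F]$, which is a graded ideal of $T$ with $T/\bar S \cong F/S \cong K$, satisfies $\bar S \cap T' = 0$, so that Lemma \ref{lemma3} applies and yields $T \sim T/\bar S \cong K$. For this I would note that $[R,F] \subseteq [F,F]$, hence $T' = [F,F]/[R,F]$; since $\bar S \subseteq R/[R,F]$ this gives $\bar S \cap T' \subseteq (R \cap [F,F])/[R,F] = \mathcal{M}(L)$, and the direct-sum decomposition then forces $\bar S \cap T' \subseteq \mathcal{M}(L) \cap \bar S = 0$.

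With this in hand the conclusion is immediate: applying the argument to two covers $K_1$ and $K_2$ produces isoclinisms $K_1 \sim T$ and $K_2 \sim T$, whence $K_1 \sim K_2$ by transitivity. I do not anticipate a real obstacle; the only delicate points are the identity $T' = [F,F]/[R,F]$ (valid precisely because $[R,F] \subseteq [F,F]$) and keeping track of the grading so that $\bar S$ is genuinely a \emph{graded} ideal of $T$, both of which are routine. Everything substantial has already been packaged into Theorem \ref{theorem1} and Lemma \ref{lemma3}.
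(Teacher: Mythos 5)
Your proposal is correct and follows essentially the same route as the paper: both reduce to showing every cover is isoclinic to $F/[R,F]$ via the decomposition $R/[R,F]=\mathcal{M}(L)\oplus(S/[R,F])$ from Theorem \ref{theorem1}, and both conclude by observing that the kernel of the projection onto the cover meets the derived subalgebra trivially (the paper invokes Corollary \ref{corollary1}, you invoke Lemma \ref{lemma3} directly, which is the same mechanism). You merely spell out the containment $\bar S\cap T'\subseteq(R\cap[F,F])/[R,F]=\mathcal{M}(L)$ that the paper dismisses as ``clear.''
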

\begin{proof}
Let $0 \longrightarrow R \longrightarrow F \longrightarrow L \longrightarrow 0$ be a free presentation of $L$. We show that all covers of $L$ are isoclinc to a factor Lie superalgebra $\frac{F}{[R, F]}$. Let $0 \longrightarrow M \longrightarrow K \longrightarrow L \longrightarrow 0$ be a stem cover of $L$, i.e. the Lie superalgebra $K$ is any cover of $L$. By Theorem \ref{theorem1}, we have 
\begin{equation}
\frac{R}{[R, F]} = \mathcal{M}(L) \oplus \ker \beta.
\end{equation}
where $\beta: \frac{F}{[R, F]} \longrightarrow K$ is an onto homomorphism. Clearly $\mbox{ker} \beta \cap \left(\frac{F}{[R, F]}\right)^{2}= 0$ and hence using Corollary
 \ref{corollary1}, $K \sim \frac {F}{[R, F]}$.
\end{proof}
The following corollary give the existence of covers for finite dimensional Lie superalgebra.
\begin{corollary}
Any finite dimensional Lie superalgebra has at least one cover.
\end{corollary}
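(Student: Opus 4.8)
The statement is, at bottom, a specialization of Lemma \ref{lemma4}, so the plan is first to record that and then to isolate the one place where finite-dimensionality is actually used. \textbf{Step 1 (existence).} I would simply invoke Lemma \ref{lemma4}: applied to the given $L$ it produces a stem extension $0 \longrightarrow M \longrightarrow K \longrightarrow L \longrightarrow 0$ with $M \cong \mathcal{M}(L)$, and by the definition of stem cover this makes $K$ a cover of $L$. So the bare assertion ``$L$ has at least one cover'' needs nothing beyond Lemma \ref{lemma4}.

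\textbf{Step 2 (where the hypothesis bites).} The point of restricting to finite-dimensional $L$ is that the cover can then be taken finite-dimensional: from $0\longrightarrow M\longrightarrow K\longrightarrow L\longrightarrow 0$ with $M\cong\mathcal{M}(L)$ one gets $\dim K = \dim L + \dim\mathcal{M}(L)$, so $\dim K$ is finite precisely when $\dim\mathcal{M}(L)=(m'\mid n')$ is finite. Thus the real task is to prove $\dim\mathcal{M}(L)<\infty$. I would do this by carrying Batten's argument into the $\mathbb{Z}_2$-graded setting: identify $\mathcal{M}(L)$ with the second cohomology $H^2(L;\mathbb{F})$ of $L$ with trivial coefficients (the Lie-superalgebra analogue of the Hopf formula), computed from the Chevalley--Eilenberg-type cochain complex of $L$. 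Since $L$ is finite-dimensional, the cochain spaces, which are of the form $\mbox{Hom}$ of (super) exterior powers of $L$ into $\mathbb{F}$, are finite-dimensional, and $H^2(L;\mathbb{F})$ is a subquotient of one of them; hence $\mathcal{M}(L)$ is finite-dimensional. Alternatively one could take a free presentation $0\to R\to F\to L\to 0$ with $F$ free on a finite homogeneous set mapping onto a basis of $L$ and bound $\dim\bigl((F'\cap R)/[F,R]\bigr)$ directly, but the infinite-dimensionality of $F$ makes that route fiddly, so the cohomological identification is the cleaner one (and it may simply be quoted from the literature on Schur multipliers of Lie superalgebras).

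\textbf{Main obstacle.} Everything formal is trivial; the only non-routine ingredient is the identification $\mathcal{M}(L)\cong H^2(L;\mathbb{F})$ in the super setting — fixing the correct sign conventions for the graded Chevalley--Eilenberg differential (antisymmetric in even arguments, symmetric in odd ones), verifying the Hopf-type formula, and checking independence of the free presentation. Granting that (or a citation for the finiteness of $\mathcal{M}(L)$), the proof closes immediately: Lemma \ref{lemma4} supplies the stem cover and the dimension count $\dim K=\dim L+\dim\mathcal{M}(L)$ shows $K$ is a finite-dimensional cover of $L$.
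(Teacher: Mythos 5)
Your Step 1 is exactly the paper's proof: the paper takes a free presentation, chooses a graded ideal $S$ with $\frac{S}{[R,F]}$ a complement of $\mathcal{M}(L)$ in $\frac{R}{[R,F]}$, and cites Theorem \ref{theorem1} (equivalently Lemma \ref{lemma4}) to conclude that $0\to \frac{R}{S}\to\frac{F}{S}\to L\to 0$ is a stem cover, so $\frac{F}{S}$ is the desired cover. That is the whole argument, and as you correctly observe, the corollary as literally stated (``has at least one cover'') follows from Lemma \ref{lemma4} alone without using finite-dimensionality of $L$. Your Step 2 goes beyond what the paper proves: the paper never claims the cover is finite-dimensional, and it nowhere establishes that $\mathcal{M}(L)$ is finite-dimensional for finite-dimensional $L$. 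If one does want the stronger conclusion that a finite-dimensional $L$ admits a \emph{finite-dimensional} cover, then your identification of the missing ingredient --- finiteness of $\dim\mathcal{M}(L)$, via a super analogue of Batten's isomorphism $\mathcal{M}(L)\cong H^{2}(L,\mathbb{F})$ or a direct bound from a finitely generated free presentation --- is the right diagnosis, but note that this identification is only asserted in the paper for Lie algebras (quoting Batten) and is not proved or cited in the superalgebra setting, so as written your Step 2 rests on an ingredient external to the paper. For the statement as given, Step 1 suffices and matches the paper; Step 2 is supererogatory and should either be dropped or supported by an explicit reference for $\mathcal{M}(L)\cong H^{2}(L,\mathbb{F})$ for Lie superalgebras.
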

\begin{proof}
Let $\frac{F}{R}\cong L$ be a free presentation of Lie superalgebra $L$ and $\mathcal{M}(L) =\frac{F^2\cap R}{[R, F]}$. For some graded ideal $S$ of $F$, consider $\frac{S}{[R, F]}$ is complement of $\mathcal{M}(L)$ in $\frac{R}{[R, F]}$. Then by Theorem \ref{theorem1}, we get $0 \rightarrow \frac{R}{S}  \longrightarrow \frac{F}{S} \longrightarrow L \longrightarrow 0$ is a stem cover for $L$. Hence $\frac{F}{S}$ is the required cover of $L$.
\end{proof}
Stem extension of a finite dimensional Lie superalgebra is a homomorphic image of its stem cover. This is a result similar to work of Yamazaki \cite{Yamazaki1964} for group case and  to work by Salmekar et. al., \cite{Salemkar2008} for Lie algebra case.
\begin{theorem}\label{theorem2}
Let $0 \longrightarrow M \longrightarrow K \longrightarrow L \longrightarrow 0$ be a stem extension of finite dimensional Lie superalgebra L. Then there is a cover say $L^{*}$ of $L$ such that $K$ is a homomorphic image of $L^{*}$.
\end{theorem}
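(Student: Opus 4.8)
The plan is to run the argument of Theorem~\ref{theorem1} in the opposite direction: lift the given stem extension through a free presentation, and then carve out an honest cover inside the lifted object. Fix a free presentation $0 \longrightarrow R \longrightarrow F \xrightarrow{\pi} L \longrightarrow 0$ of $L$. A stem extension is in particular central, so Lemma~\ref{lem5} applied with $\alpha = \mathrm{id}_{L}$ and the central extension $0\longrightarrow M\longrightarrow K\xrightarrow{\psi}L\longrightarrow 0$ produces a homomorphism $\beta\colon F/[R,F] \longrightarrow K$ with $\beta\big(R/[R,F]\big)\subseteq M$ and $\psi\circ\beta=\bar{\pi}$, where $\bar{\pi}\colon F/[R,F]\to L$ is the induced epimorphism with $\ker\bar{\pi}=R/[R,F]$.

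Next I would show $\beta$ is onto, exactly as in the first half of the proof of Theorem~\ref{theorem1}: since $\bar{\pi}=\psi\circ\beta$ is onto we get $K=\mathrm{Im}\,\beta+M$, and because $M\subseteq Z(K)$ we have $K'=(\mathrm{Im}\,\beta)'\subseteq\mathrm{Im}\,\beta$; the stem hypothesis $M\subseteq K'$ then forces $M\subseteq\mathrm{Im}\,\beta$, so $K=\mathrm{Im}\,\beta$. As $\ker\beta\subseteq\ker\bar{\pi}=R/[R,F]$, we may write $\ker\beta=T/[R,F]$ for a graded ideal $T$ of $F$ with $[R,F]\subseteq T\subseteq R$, whence $K\cong F/T$. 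The reasoning of Theorem~\ref{theorem1} also gives $\beta\big(R/[R,F]\big)=M$ and, more importantly, $\beta\big(\mathcal{M}(L)\big)=M$: for $m\in M\subseteq K'$ one pulls a preimage back into $(F'\cap R)/[R,F]=\mathcal{M}(L)$ using that $(F/[R,F])'=F'/[R,F]$ (note $[R,F]\subseteq F'$). Comparing the two surjections $R/[R,F]\twoheadrightarrow M$ and $\mathcal{M}(L)\twoheadrightarrow M$, both with kernel contained in $\ker\beta=T/[R,F]$, yields $R/[R,F]=\mathcal{M}(L)+T/[R,F]$.

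Now comes the decisive step. Choose a $\mathbb{Z}_{2}$-graded complement $W$ of the graded subspace $\mathcal{M}(L)\cap\big(T/[R,F]\big)$ inside $T/[R,F]$, and let $S$ be the preimage of $W$ in $F$. Since $R/[R,F]$ is central in $F/[R,F]$, every graded subspace of it is a graded ideal, so $S$ is a graded ideal of $F$ with $[R,F]\subseteq S\subseteq T\subseteq R$. One checks directly that $\mathcal{M}(L)\cap W=0$ and $\mathcal{M}(L)+W=\mathcal{M}(L)+\big(T/[R,F]\big)=R/[R,F]$, so $R/[R,F]=\mathcal{M}(L)\oplus S/[R,F]$. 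Because $L$ is finite dimensional, $\mathcal{M}(L)$ is finite dimensional, so Theorem~\ref{theorem1} applies and shows that $L^{*}:=F/S$, together with $R/S$, is a stem cover of $L$; thus $L^{*}$ is a cover of $L$. Finally $S\subseteq T$ gives a canonical epimorphism $F/S\twoheadrightarrow F/T\cong K$, so $K$ is a homomorphic image of $L^{*}$, as desired.

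I expect the main obstacle — and the reason the stem condition, not just centrality, is indispensable — to be ensuring that the complement $W$ can be chosen inside $\ker\beta$; this requires the splitting datum $R/[R,F]=\mathcal{M}(L)+\ker\beta$, which I would obtain precisely from the identity $\beta(\mathcal{M}(L))=M$, whose proof relies on $M\subseteq K'$. The remaining ingredients — existence of graded vector-space complements, the fact that graded subspaces of a central ideal are ideals, and $(F/[R,F])'=F'/[R,F]$ — are routine, and one should also record that $L^{*}$ indeed satisfies conditions~(1)--(2) of Theorem~\ref{theorem1} with $M^{*}=R/S\cong\mathcal{M}(L)$.
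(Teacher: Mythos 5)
Your proposal is correct and follows essentially the same route as the paper: lift the stem extension through the free presentation via Lemma~\ref{lem5}, use $M\subseteq Z(K)\cap K'$ to get $\beta$ onto and $\beta(\mathcal{M}(L))=M$, write $\ker\beta=T/[R,F]$, obtain $R/[R,F]=\mathcal{M}(L)+T/[R,F]$, and split off a graded complement $S/[R,F]$ of $\mathcal{M}(L)\cap(T/[R,F])=(T\cap F')/[R,F]$ so that Theorem~\ref{theorem1} makes $F/S$ a cover surjecting onto $F/T\cong K$. The only (harmless) deviation is that you deduce $R/[R,F]=\mathcal{M}(L)+T/[R,F]$ by comparing the two surjections onto $M$ modulo $\ker\beta$, whereas the paper gets the equivalent identity $R=(R\cap F')+T$ from a dimension count using $\dim M<\infty$.
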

\begin{proof}
The same proof as \cite[Theorem 2.5]{Salemkar2008} will work.
\end{proof}
Following are some immediate consequences.
\begin{corollary}
The maximal stem extensions of Lie superalgebras are precisely the same as its stem covers. 
\end{corollary}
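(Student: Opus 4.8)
The plan is to deduce both inclusions from Theorem \ref{theorem2}, working with $L$ finite dimensional (so that $\mathcal{M}(L)$, hence every cover of $L$, is finite dimensional, and any two covers of $L$ have the same super-dimension $\dim L + \dim\mathcal{M}(L)$).

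First I would show that every maximal stem extension $0\to M\to K\to L\to 0$ is a stem cover. By Theorem \ref{theorem2} there is a cover $L^{*}$ of $L$ together with an epimorphism $\rho\colon L^{*}\to K$ exhibiting $K$ as a homomorphic image of $L^{*}$. Unwinding the construction in the proof of Theorem \ref{theorem2} — there $L^{*}=F/S$ and $K\cong F/T$ with $S\subseteq T\subseteq R$, and $\rho$ is induced by the identity on $L$, i.e. on $F/R$ — shows that $\rho$ is in fact a morphism of extensions of $L$. Since a stem cover is a stem extension, $\rho$ is an epimorphism from one stem extension of $L$ onto $0\to M\to K\to L\to 0$, so maximality forces $\rho$ to be an isomorphism. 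Transporting $\ker(L^{*}\to L)\cong\mathcal{M}(L)$ along $\rho$ then gives $M\cong\mathcal{M}(L)$, i.e. the given extension is a stem cover.

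Conversely, I would show that every stem cover $0\to\mathcal{M}(L)\to L^{*}\to L\to 0$ is maximal. Let $0\to M'\to K'\to L\to 0$ be any stem extension and let $g\colon K'\to L^{*}$ be an epimorphism of extensions onto the given one; we must prove $g$ is an isomorphism. Applying Theorem \ref{theorem2} to $K'$ produces a cover $\widetilde{L}$ of $L$ and a surjection $\rho\colon\widetilde{L}\to K'$. Then $g\circ\rho\colon\widetilde{L}\to L^{*}$ is a surjective Lie superalgebra homomorphism between two covers of $L$, which have equal finite super-dimension; hence $g\circ\rho$ is bijective, so $\rho$ is injective and therefore an isomorphism, and then $g=(g\rho)\rho^{-1}$ is an isomorphism as well.

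The one point requiring care is the verification, in the first direction, that the homomorphism furnished by Theorem \ref{theorem2} respects the projections onto $L$ — this is needed because maximality of a stem extension is phrased in terms of morphisms of extensions — which is why I would argue from the explicit free-presentation construction in that proof rather than from its statement alone; the remainder is routine diagram-chasing with short exact sequences and counting of super-dimensions.
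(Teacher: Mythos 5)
Your proof is correct, and for the direction the paper actually writes out --- that a maximal stem extension must be a stem cover --- it is essentially the paper's own argument: invoke Theorem \ref{theorem2} to produce a cover $L^{*}$ with an epimorphism onto $K$, and let maximality force that epimorphism to be an isomorphism, whence $M\cong\mathcal{M}(L)$. You add two things. First, you check that the surjection $L^{*}\to K$ furnished by Theorem \ref{theorem2} is genuinely a morphism of extensions over $L$ (it is, being induced by the inclusion $S\subseteq T$ inside the common free presentation $F$), which is needed before the definition of maximality can be applied; the paper uses this silently. Second, and more substantially, the paper's proof establishes only one of the two inclusions implicit in ``precisely the same as'': it never argues that a stem cover is a maximal stem extension. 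Your converse direction --- factoring an arbitrary epimorphism $g\colon K'\to L^{*}$ from a stem extension onto the stem cover through a cover $\widetilde{L}$ of $K'$ via Theorem \ref{theorem2}, and observing that the surjection $g\circ\rho$ between two covers of the same finite super-dimension $\dim L+\dim\mathcal{M}(L)$ must be bijective, so that first $\rho$ and then $g$ is an isomorphism --- fills that gap cleanly. The only input beyond the paper's stated results is the finite-dimensionality of $\mathcal{M}(L)$ for finite-dimensional $L$, which is the standing assumption throughout this section and follows from the Schur-type theorem applied to $F/[R,F]$.
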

\begin{proof}
Let $0 \longrightarrow  M \longrightarrow K \longrightarrow L \longrightarrow 0$ be maximal stem extension of $L$. By Theorem \ref{theorem2} we have a stem cover $0 \longrightarrow M^{*} \longrightarrow L^{*} \longrightarrow L \longrightarrow 0 $ of $L$ such that $\phi:L^{*} \longrightarrow K$ is a homomorphism, hence necessarily an isomorphism.
\end{proof}
\begin{corollary}
Let $0 \longrightarrow M_{i} \longrightarrow K_{i} \longrightarrow L \longrightarrow 0$ here $i=1,2$,  be two maximal stem extensions of a Lie superalgebra $L$ of finite dimension, then $\dim K_{1}= \dim K_{2}=(m \mid n)$(say).
\end{corollary}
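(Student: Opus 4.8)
The plan is to reduce the statement to a direct dimension count, using the identification of maximal stem extensions with stem covers established in the preceding corollary. First I would invoke that corollary: each of the two maximal stem extensions $0 \longrightarrow M_i \longrightarrow K_i \longrightarrow L \longrightarrow 0$ ($i = 1,2$) is in fact a stem cover of $L$, so by the definition of a stem cover $M_i \cong \mathcal{M}(L)$. In particular the two kernels are abstractly isomorphic, hence $\dim M_1 = \dim M_2 = \dim \mathcal{M}(L)$.

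Next I would record that $\mathcal{M}(L)$ is finite dimensional. This follows from the finite dimensionality of $L$: choosing a free presentation $0 \longrightarrow R \longrightarrow F \longrightarrow L \longrightarrow 0$ with $F$ free on a finite $\mathbb{Z}_2$-graded generating set, the space $(F' \cap R)/[F,R]$ is spanned by the classes of a bounded collection of iterated brackets of generators, exactly as in the Lie algebra case treated in \cite{Batten1993}. Write $\dim \mathcal{M}(L) = (r \mid s)$ and $\dim L = (p \mid q)$, both finite.

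Finally, from the exactness of $0 \longrightarrow M_i \longrightarrow K_i \longrightarrow L \longrightarrow 0$ as a sequence of $\mathbb{Z}_2$-graded vector spaces one gets $\dim K_i = \dim M_i + \dim L$ for each $i$, and since $\dim M_i = (r \mid s)$ does not depend on $i$, we conclude
\[ \dim K_1 = (r+p \mid s+q) = \dim K_2 , \]
which is the common finite value $(m \mid n)$ asserted in the statement.

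I expect no serious obstacle here; the only point needing any care is the finite dimensionality of $\mathcal{M}(L)$, and even that could be bypassed if one is content to state $\dim K_1 = \dim K_2$ as (a priori possibly infinite) superdimensions. One could alternatively route the argument through isoclinism — by the corollary that all covers of $L$ are isoclinic one has $K_1 \sim K_2$, so $K_1/Z(K_1) \cong K_2/Z(K_2)$ and $K_1' \cong K_2'$ — but recovering equality of total dimensions this way would additionally require comparing the centers $Z(K_1)$ and $Z(K_2)$, so the direct count via the short exact sequences is the cleaner route.
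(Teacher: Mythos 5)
Your proposal is correct and is the argument the paper implicitly intends: the preceding corollary identifies each maximal stem extension with a stem cover, so $M_1\cong M_2\cong \mathcal{M}(L)$, and exactness of the graded sequences gives $\dim K_i=\dim \mathcal{M}(L)+\dim L$ (the paper itself supplies no separate proof). One small caveat: your justification that $\mathcal{M}(L)$ is finite dimensional via a ``bounded collection of iterated brackets'' is not quite right as stated (there are infinitely many iterated brackets in $F'$); the clean route is that $F/[R,F]$ has finite dimensional central quotient $L$, so by the Schur-type theorem its derived subalgebra, and hence $(F'\cap R)/[R,F]$, is finite dimensional --- but as you note, the stated conclusion only needs $\dim M_1=\dim M_2$, so this does not affect the proof.
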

\section{structure of stem Lie superalgebra} \label{st}
Stem Lie algebras are first introduced and studied by Moneyhun \cite{Moneyhun1994}. Here we define stem Lie superalgebras and prove that each isoclinism family contains a stem Lie superalgebra.
\begin{definition}
A Lie superalgebra $L$ is called stem Lie superalgebra whenever $Z(L) \subseteq L^2$.
\end{definition}

\begin{lemma}\label{lem4}
Suppose  $\mathcal{C}$ is a isoclinic family of Lie superalgebras. Then
\begin{enumerate}
\item[$(1)$] $\mathcal{C}$ contains a stem Lie superalgebra.
\item[$(2)$] Each finite dimensional Lie superalgebra $T \in \mathcal{C}$ is stem if and only if $T$ has minimal even and odd dimension in $\mathcal{C}$.
\end{enumerate}
\end{lemma}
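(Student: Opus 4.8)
The plan is to realize a stem member of $\mathcal{C}$ as a minimal subalgebra of an arbitrary member, and then to read off the minimal dimension from the two isoclinism invariants of the family, namely $L/Z(L)$ and $L'$.

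For $(1)$, fix any $L\in\mathcal{C}$ and choose a graded subalgebra $H\leq L$ minimal with respect to the property $H+Z(L)=L$ (such an $H$ exists by a dimension argument when $L$ is finite-dimensional, and by Zorn's lemma in general). Since $H+Z(L)=L$, the second lemma of Section \ref{shl} gives $H\sim L$, so $H\in\mathcal{C}$, and it remains to see that $Z(H)\subseteq H'$. First, any $z\in Z(H)$ lies in $Z(L)$, because $[z,L]=[z,H+Z(L)]=[z,H]=0$. Suppose now, for contradiction, that some homogeneous $z\in Z(H)$ is not in $H'$. As $H'$ is graded, there is a graded complement $V$ of $\mathbb{F}z$ in $H$ with $H'\subseteq V$; then $[V,V]\subseteq H'\subseteq V$, so $V$ is a subalgebra, and since $z\in Z(H)\subseteq Z(L)$ we have $V+Z(L)=H+Z(L)=L$, contradicting the minimality of $H$. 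Hence $H$ is a stem Lie superalgebra in $\mathcal{C}$.

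For $(2)$, let $T\in\mathcal{C}$ be finite-dimensional. If $T$ has minimal dimension in $\mathcal{C}$, apply the construction of $(1)$ with $L=T$ to obtain a stem subalgebra $H\leq T$ with $H\sim T$; then $H\in\mathcal{C}$ gives $\dim H\geq\dim T$, while $H\leq T$ gives $\dim H\leq\dim T$, whence $H=T$ and $T$ is stem. Conversely, suppose $T$ is stem, and let $K\in\mathcal{C}$ be arbitrary with an isoclinism $(\alpha,\beta)$, $\alpha:T/Z(T)\to K/Z(K)$ and $\beta:T'\to K'$ isomorphisms. I claim $\beta$ maps $Z(T)\cap T'$ into $Z(K)\cap K'$. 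Granting this, since $\beta$ is injective, $\dim(T/Z(T))=\dim(K/Z(K))$, and $Z(T)=Z(T)\cap T'$ as $T$ is stem,
\[
\dim T=\dim(T/Z(T))+\dim(Z(T)\cap T')\leq\dim(K/Z(K))+\dim(Z(K)\cap K')\leq\dim K,
\]
so $T$ has minimal dimension in $\mathcal{C}$. To prove the claim, write $w=\sum_i[a_i,b_i]\in Z(T)\cap T'$, fix a homogeneous $s\in K$, and choose a homogeneous $t\in T$ with $s\in\alpha(t+Z(T))$. By compatibility of the isoclinism, $\beta(w)=\sum_i[c_i,d_i]$ where $c_i\in\alpha(\bar a_i)$ and $d_i\in\alpha(\bar b_i)$; applying compatibility again to each outer bracket $[[a_i,b_i],t]$ and using that $\alpha$ is a homomorphism, a preimage of $\overline{[a_i,b_i]}$ under $\alpha$ is $[c_i,d_i]$ up to an element of $Z(K)$, so those central corrections are killed upon bracketing with $s$ and one obtains $[\beta(w),s]=\beta([w,t])=\beta(0)=0$. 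As $s$ was arbitrary, $\beta(w)\in Z(K)$, hence $\beta(w)\in Z(K)\cap K'$.

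I expect the main obstacle to be precisely this last step: checking that $\beta$ sends $Z(T)\cap T'$ into $Z(K)\cap K'$. The delicate part there is the bookkeeping of the ambiguities ``modulo the centres'' that are inherent in the compatibility condition of an isoclinism, and verifying that these ambiguities disappear once the outer bracket is taken. The remaining ingredients are routine; the only other point meriting a remark is the existence of the minimal subalgebra in $(1)$, which is immediate by dimension for the finite-dimensional members that $(2)$ concerns.
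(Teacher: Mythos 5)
Your part $(2)$ is correct, and your verification that $\beta$ carries $Z(T)\cap T'$ into $Z(K)\cap K'$ is sound (the central ambiguities in the compatibility condition do die when the outer bracket is taken); the paper gets the same numerical fact more cheaply by a dimension count, comparing $L'/(L'\cap Z(L))\cong (L/Z(L))'\cong (T/Z(T))'\cong T'/(T'\cap Z(T))$ with the isomorphism $L'\cong T'$, which avoids the bookkeeping but needs finite-dimensionality of the derived subalgebras (available here). Your part $(1)$, however, takes a genuinely different route -- a minimal graded subalgebra $H$ supplementing $Z(L)$ rather than a quotient -- and this is where the gap lies.

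The gap is the existence of the minimal supplement $H$ when $L$ is infinite-dimensional. Zorn's lemma does not deliver it: to extract a \emph{minimal} element you must order supplements by reverse inclusion and show every descending chain $H_1\supseteq H_2\supseteq\cdots$ of graded subalgebras with $H_i+Z(L)=L$ has a lower bound that still supplements $Z(L)$; the natural candidate $\bigcap_i H_i$ is a subalgebra, but there is no reason that $\bigl(\bigcap_i H_i\bigr)+Z(L)=L$, since the decompositions $x=h_i+z_i$ need not stabilize along the chain. This matters because part $(1)$ is stated for an arbitrary isoclinism family and is invoked later (in the final theorem of Section \ref{st}) for Lie superalgebras that are only assumed to have finite-dimensional derived subalgebra and finitely generated central factor -- such $L$ can have infinite-dimensional centre. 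The paper sidesteps the issue entirely by passing to a quotient instead of a subalgebra: choose a graded vector-space complement $S$ of $Z(L)\cap L'$ inside $Z(L)$ (such a complement always exists, with no chain condition), note $S$ is a central graded ideal with $S\cap L'=0$, and apply Lemma \ref{lemma3} to get $T=L/S\sim L$ with $Z(T)=\bigl(Z(L)\cap L'+S\bigr)/S\subseteq (L/S)'$. I would either adopt that construction for $(1)$, or restrict your subalgebra argument to the finite-dimensional case (where it is correct and is exactly what part $(2)$ needs) and supply the quotient construction for the general statement.
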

\begin{proof}
Let $L= L_{\bar{0}}\oplus L_{\bar{1}}$ be a Lie superalgebra and $ L \in \mathcal{C}$. So $Z(L) \cap L^2=L_{\bar{0}}\cap (Z(L)\cap L^2) \oplus L_{\bar{1}}\cap (Z(L)\cap L^2) $ is a graded ideal of $L$. Consider $S=L_{\bar{0}}\cap S \oplus L_{\bar{1}}\cap S$ be a $\mathbb{Z}_{2}$-graded vector space complement of $Z(L) \cap L^2$ in $Z(L)$, i.e., $Z(L)= Z(L)\cap L^2\oplus S$. Clearly $S$ is an graded ideal of $L$ and $S \cap L^2=0$. Denote $\frac{L}{S}=: T$ and using Lemma \ref{lemma3}
we have $T \sim L$. So $T \in \mathcal{C}$ and further
\begin{align*}
Z(T)=Z\left(\frac{L}{S}\right)
&=\frac{Z(L)+S}{S}\\
&= \frac{(Z(L) \cap L^2\oplus S)+S}{S}\\
&= \frac{S+Z(L)\cap L^2}{S}\\
&\subseteq \frac{S+L^2}{S}= \left(\frac{L}{S}\right)^{2}.
\end{align*} 
Hence $ T$ is a stem Lie superalgebra which proves $(1)$.

\bigskip

Consider $T \in \mathcal{C}$ and $T$ is stem Lie superalgebra with $\dim T=(m \mid n)$. Let $L \in \mathcal{C}$. Consider 
\begin{align*}
\frac{L^2}{L^2 \cap Z(L)} \cong \frac{L^2+Z(L)}{Z(L)}
&=\left(\frac{L}{Z(L)}\right)^{2}\\
&\cong \left(\frac{T}{Z(T)}\right)^{2}\\
&= \frac{T^{2}+Z(T)}{Z(T)}\\
& \cong \frac{T^{2}}{T^{2} \cap Z(T)}= \frac{T^{2}}{Z(T)}
\end{align*}
and also we have $L^2\cong T^2$. Hence $\dim Z(T)= \dim (Z(L) \cap L^2)=(r \mid s)$(say) with $\dim Z(T)_{\bar{0}}= \dim (Z(L) \cap L^2)_{\bar{0}}= r$ and $\dim Z(T)_{\bar{1}}= \dim (Z(L) \cap L^2)_{\bar{1}}$. Now $ \dim Z(T)= \dim (Z(L) \cap L^2)=(r \mid s) \leq \dim Z(L)=(p \mid q)$ where $r \leq p$ and $s \leq q$. As $\frac{L}{Z(L)} \cong \frac{T}{Z(T)}$ so $\dim \frac{L}{Z(L)} = \dim \frac{T}{Z(T)}=(m-r \mid n-s)$. Hence,\begin{align*}
\dim T &= (m-r \mid n-s)+\dim Z(T)\\
&\leq (m-r \mid n-s)+\dim Z(L) \\
&= (m-r+p \mid n-s+q)= \dim L,
\end{align*} i.e., $\dim T_{\bar{0}} \leq \dim L_{\bar{0}}$ and $\dim T_{\bar{1}} \leq \dim L_{\bar{1}}$, i.e. T is of minimum dimension as required.\\
 Conversely let $T \in \mathcal{C}$ and suppose $T$ has minimum even and odd dimension. Claim is $T$ is stem Lie superalgebra. Consider a complementary $\mathbb{Z}_{2}$-graded vector subspace $R$ of $Z(T)\cap T^2$ in $Z(T)$. Clearly $R \cap T^2=0$ and $R$ is an ideal in $T$. So $\frac{T}{R} \sim T$. But since $\dim T$ is minimum in $\mathcal{C}$, $\dim R=0$ implies $R=0$. We have $Z(T)= Z(T)\cap T^2$, i.e., $Z(T) \subseteq T^2$ as required.   
\end{proof}

A famous result of Schur is that if $G/Z(G)$ is finite then, so is the derived group. An analogue result of Schur for Lie superalgebra case is if $L/Z(L)$ is finite dimensional then so is $L^{2}$, which is also well known\cite{Nayak2019}. Further Niroomand \cite{Nir2010} proved a converse to Schur's theorem.  A generalisation of this is given by  Sury  \cite{Sury2010}. Here we extend the result of Sury to Lie superalgebra case as a converse of Schur's theorem for Lie superalgebras. For a finite set we denote order of $S$ as $\#S$.

\begin{theorem}\label{th5}
Let $L=L_{\bar{0}}\oplus L_{\bar{1}}$ be a Lie superalgebra such that its commutator set $S$ is finite. Then $L^2$ is finite. Further if $L/Z(L)$ is finitely generated by $(m \mid n)$ elements then $\dim L/Z(L)$ has finite dimension. More precisely, 
$$
\dim \left( L/Z(L) \right) \leq (m+n) \dim L^2.
$$ 
\end{theorem}
\begin{proof}
Let $S=\{[x_{i}, y_{i}] \mid 1 \leq i \leq t\}$. Consider a finitely generated subalgebra $H=\{x_{1}, \cdots, x_{k}, y_{1}, \cdots,y_{r}, x_{k+1}, \cdots, x_{t}, y_{r+1}, \cdots, y_{t} \}$ of $L$. Here $ x_{i}, y_{j} \in L_{\bar{0}}$ for $1 \leq i \leq k$ and $1 \leq j \leq r$, and $x_{k+1}, \cdots, x_{t}, y_{r+1}, \cdots y_{t} \in L_{\bar{1}}$. Clearly $S$ is the commutator set for $H$. Consider the natural projection $ \phi: H \longrightarrow H/Z(H)$. Let $H/Z(H)$ be generated by $ \phi(h_{1}), \cdots, \phi(h_{m}), \phi(h_{m+1}), \cdots, \phi(h_{m+n})$ where $h_{i} \in H_{\bar{0}}$ for $1 \leq i \leq m$ and $h_{m+j} \in H_{\bar{1}}$ for $1 \leq j \leq n$ . Denote $\phi(h_{i})=g_{i}, 1 \leq i \leq m+n$. Clearly centre of $H$ contains those $h \in H$ that commutes with $g_{i}$. Hence $Z(H)=\cap_{i=1}^{m+n} C_{H}(g_{i})$. Now, consider the adjoint representation of $H$, which is a homomorphism $\mbox{ad}: H \longrightarrow \mathfrak{gl}(H)$ defined as $\mbox{ad}_{g_{i}}(g_{j})=[g_{i}, g_{j}]$. For $g_{i} \in H$ kernel of $\mbox{ad}_{g_{i}}$ denoted as $\ker(\mbox{ad}_{g_{i}})$ is precisely $C_{H}(g_{i})$. Also note that $\dim \mbox{ad}_{g_{i}}(H) \leq \#S$. 

We have
\begin{align*}
\dim (H/Z(H)) &= \dim \left(\frac{H}{\cap_{i=1}^{m+n} \ker(\mbox{ad}_{g_{i}})} \right) \\
& \leq \dim \left(\sum_{i=1}^{m+n} \left(\frac{H}{\ker(\mbox{ad}_{g_{i}}}\right) \right)= \dim \left(\sum_{i=1}^{m+n} \mbox{ad}_{g_{i}}(H) \right)\\
&= (m+n) \#S.
\end{align*}
Now using Schur's theorem $H^{2}$ is finite dimensional. So $[L, L]=<S>=[H, H]$ is finite dimensional. Finally we have a Lie superalgebra $L$ with $L^{2}$ is finite dimensional and by the hypothesis $L/Z(L)$ is finitely generated by $(m \mid n)$ elements. Using the same arguments as earlier we have $\dim L/Z(L) \leq (m+n) \dim L^2$ where $L/Z(L)$ is generated by $(m\mid n)$ elements.
\end{proof}

Finally using the previous two results we have the following theorem.
\begin{theorem}
Let $L$ be a Lie superalgebra such that $L/Z(L)$ is finitely generated. If $\dim L^{2}=(r \mid s)$, then $L$ is isoclinic to a finite dimensional Lie superalgebra.
\end{theorem}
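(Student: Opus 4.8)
The plan is to first use Theorem \ref{th5} to reduce to the case where $L/Z(L)$ is finite dimensional, and then to replace $L$ by a finite dimensional isoclinic quotient obtained by killing a central complement of $Z(L)\cap L'$.

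First I would invoke Theorem \ref{th5}: since $L/Z(L)$ is finitely generated, say on $m+n$ homogeneous generators, and $L'$ is finite dimensional of dimension $(r\mid s)$, that theorem gives that $L/Z(L)$ is finite dimensional, with $\dim(L/Z(L))\le (m+n)\dim L'$. Write $\dim(L/Z(L))=(p\mid q)$. Next, note that $Z(L)\cap L'$ is a $\mathbb{Z}_2$-graded subspace of $L'$, hence finite dimensional, say of dimension $(r'\mid s')$ with $r'\le r$, $s'\le s$. Choose a $\mathbb{Z}_2$-graded vector space complement $W$ of $Z(L)\cap L'$ in $Z(L)$, so that $Z(L)=\big(Z(L)\cap L'\big)\oplus W$ as superspaces. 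Since $W\subseteq Z(L)$ we have $[W,L]=0\subseteq W$, so $W$ is a (central) graded ideal of $L$, and moreover $W\cap L'\subseteq W\cap\big(Z(L)\cap L'\big)=0$.

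Now apply Lemma \ref{lemma3} with $I=W$: because $W\cap L'=0$, we conclude $L\sim L/W$. It remains to check that $L/W$ is finite dimensional. From $W\subseteq Z(L)\subseteq L$ we get the short exact sequence
\[
0\longrightarrow Z(L)/W\longrightarrow L/W\longrightarrow L/Z(L)\longrightarrow 0,
\]
and the decomposition $Z(L)=\big(Z(L)\cap L'\big)\oplus W$ gives $Z(L)/W\cong Z(L)\cap L'$. Hence
\[
\dim(L/W)=\dim(L/Z(L))+\dim\big(Z(L)\cap L'\big)=(p+r'\mid q+s'),
\]
which is finite. Therefore $L$ is isoclinic to the finite dimensional Lie superalgebra $L/W$, as claimed.

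There is no serious obstacle here; the only points needing a word of care are that $Z(L)\cap L'$ is genuinely a graded subspace (so a graded complement $W$ exists) and that $W$ is automatically a graded ideal because it is central. The real content is supplied by Theorem \ref{th5}, which guarantees $L/Z(L)$ is finite dimensional and thereby forces $L/W$ to be finite dimensional once the "inessential" part of the center has been removed.
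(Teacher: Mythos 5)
Your proof is correct and follows essentially the same route as the paper: the paper first passes to a stem Lie superalgebra $T$ in the isoclinism class of $L$ via Lemma \ref{lem4}, whose proof is exactly your construction of $T=L/W$ with $W$ a graded complement of $Z(L)\cap L'$ in $Z(L)$, and then bounds $\dim T$ using Theorem \ref{th5} together with $\dim Z(T)\le\dim T'=\dim L'$. You have merely inlined the construction from Lemma \ref{lem4} instead of citing it; the substance is identical.
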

\begin{proof}
 Let $\mathcal{C}$ be an isoclinism class of Lie superalgebras with $L \in \mathcal{C}$. Using Lemma \ref{lem4}, $L$ is isoclinic to a stem Lie superalgebra say $T$. We have $Z(T) \subseteq T^{2}$ and $T ^{2}\cong L^{2}$, so $ \dim T^{2}=(r \mid s)$. It follows $ \dim Z(T) \leq r+s$. Further by Theorem \ref{th5}, $ L/Z(L)$ is finite dimensional. Suppose $\dim (L/Z(L)) = (m \mid n)$. This implies $\dim(T/Z(T)) =(m \mid n)$ as $T/Z(T) \cong L/Z(L)$. So, $T$ is finite dimensional, and $\dim T \leq m+r+n+s$.
\end{proof}

\section{ Baer's theorem and a converse}\label{ba}

In 1952 Baer \cite{Baer1952} extended the result of Schur and showed that  if $G/Z_{n}(G)$ is finite for a group $G$ then $G^{n+1}$ is finite too where $Z_{n}(G)$ and $G_{n+1}$ denote  the $n$th term of upper central series and $(n+1)$th term of lower central series respectively. The theorem is generalised by Salemkar et. al.\cite{Salemkar2009}. We prove that Baer's theorem still holds for Lie superalgebras.
\begin{theorem}\label{th6}
Let $L$ be a Lie superalgebra, and $\dim L/ Z_{c}(L)=(m \mid n)$ then \[\dim L^{c+1} \leq \frac{1}{c+1}\sum_{a\mid (c+1)} \mu(a)(m-(-1)^{a} n)^{\frac{c+1}{a}}.\]
\end{theorem}

\begin{proof}
Let $\{ x_{1}+Z_{c}(L), x_{2}+Z_{c}(L), \cdots, x_{m}+Z_{c}(L), x_{m+1}+Z_{c}(L), \cdots, x_{m+n}+Z_{c}(L) \}$ be a basis for $L/Z_{c}(L)$ where $x_{i} \in L_{\bar{0}}, 1 \leq i \leq  m$, and $x_{m+j} \in L_{\bar{1}}, 1 \leq j \leq n$. Any element of $L/ Z_{c}(L)$ can be written as $\sum_{i=1}^{m+n}  a_{i}x_{i} + h$ for $a_{i} \in \mathbb{F}$ and $h \in Z_{c}(L)$. Hence a generating set for $L^{c+1}$  precisely is to find all commutators of length $c+1$ from the list $\{x_{1}, x_{2}, \cdots, x_{m}, x_{m+1}, \cdots, x_{m+n}\}$. Hence $\dim L^{c+1} \leq \frac{1}{c+1}\sum_{a\mid (c+1)} \mu(a)(m-(-1)^{a} n)^{\frac{c+1}{a}}$.
\end{proof}
We can obtain the Schur's theorem as an immediate corollary to the Theorem \ref{th6}.
\begin{corollary}
Suppose $L$ is a Lie superalgebra with $\dim L/Z(L)=(m \mid n)$ then $\dim L^{2} \leq 1/2[(m+n)^{2}+(n-m)]$.
\end{corollary}
\begin{proof}
Clearly  $\dim L^{2} \leq \frac{1}{2}[\mu(1)(m+n)^{2}+\mu(2)(m-n)]=1/2[(m+n)^{2}+(n-m)]$.
\end{proof}
Now we state and prove a converse of Baer's theorem. Before we record here a lemma useful in proving the stated theorem.
\begin{lemma}\label{lemma1}
$\frac{Z_{j+i}(L)}{Z_{j}(L)}=Z_{i}\left(\frac{L}{Z_{j}(L)}\right).$
\end{lemma}
\begin{proof}
We induct on $i$. For $i=1$ we have definition of $j+1$-th centre of $L$. Assume result is true for $i$. For $i+1$, consider 
\begin{equation*}
\begin{split}
Z_{i+1}\left(\frac{L}{Z_{j}(L)}\right) &= \{ x+Z_{j}(L) \mid [x+Z_{j}(L), y+Z_{j}(L)] \in \frac{Z_{j+i}(L)}{Z_{j}(L)}, \forall y \in L \} \\
&=\{x+Z_{j}(L) \mid  [x, y] \in Z_{i+j}(L),  \forall y \in L \}\\
&= Z_{i+j+1}\left(\frac{L}{Z_{j}(L)}\right).
\end{split}
\end{equation*} First equality follows using induction hypothesis.
\end{proof}

\begin{theorem}
Let $L_{\bar{0}} \oplus L_{\bar{}1}$ be a Lie superalgebra such that the commutator set $S$ having elements of length $c+1$ is finite. Then $L^{c+1}$ is finite dimensional. Further, if $L/Z_{c}(L)$ is generated by $(m \mid n)$ elements then \[\dim L/Z_{c}(L) \leq (m+n)^{c} \dim L^{c+1}.\]
\end{theorem}
\begin{proof}
Let $S=\{s_{1}, \cdots s_{t}, s_{t+1}, \cdots, s_{t+k}\}$ be finite where $s_{i}$ are commutators of length $c+1$. Precisely $s_{i}=[x_{i,1}, x_{i,2}, \cdots, x_{i,c+1}]$ for $i=1, \cdots, t+k$. Let $H=\{x_{i,1}, x_{i,1}, \cdots, x_{i,c+1} |1\leq i \leq t+k \}$ where $ x_{i,r}$ for $1\leq r \leq c+1$ are homogeneous elements be a finitely generated subalgebra of $L$ such that $S$ is its commutator set having elements of length $c+1$.  We induct on $c$. If $c=1$ then $S$ is a finite commutator set of length $2$ and by Theorem \ref{th5}, $L^2$ is finite dimensional and $\dim L/Z_{2}(L) \leq (m+n) \dim L^2.$ 
\smallskip

Let us assume the result is true for $c$. With $S$ is finite commutator set of length $c$ then it is easy to see $L^{c}$ is finite dimensional. In addition if $L/Z_{c-1}(L)$ is finitely generated then from induction hypothesis \begin{equation}
\dim L/Z_{c-1}(L) \leq (m+n)^{c-1} \dim L^c.
\end{equation}
Our claim is the result is true for $c+1$. Let $L/Z_{c}(L)$ be generated by $g_{1}+Z_{c}(L), \cdots, g_{m}+Z_{c}(L), g_{m+1}+Z_{c}(L), \cdots , g_{m+n}+Z_{c}(L)$ where $g_{i} \in L_{\bar{0}}$ for $1\leq i \leq m$ and $g_{m+j} \in L_{\bar{1}}$
 for $1\leq j \leq n$. Clearly $Z_{c}(L)$ consists of those elements of $L^{c}$ which commutes with $g_{i}$. Consider the adjoint representation  $\mbox{ad}: L^{c} \longrightarrow \mathfrak{gl}(L)$, we have 
\[Z_{c}(L)=\cap_{i=1}^{m+n}C_{L^{c}}(g_{i})\ \;\; \mbox{and}\;\; C_{L^{c}}(g_{i})= \ker (\mbox{ad}_{L^{c}}).\]

 As $L/Z_{c-1}(L)$ is finitely generated so is  $\frac{L/Z(L)}{Z_{c-1}(L/Z(L))}$. Now 
\[ \left( \frac{L}{Z(L)}\right)^{c}=\frac{L^{c}+Z(L)}{Z(L)} \cong \frac{L^{c}}{L^c \cap Z(L)}\] is finite dimensional as $L^{c}$ is finite dimensional. By using Lemma \ref{lemma1}  we have $L/Z_{c}(L)= \frac{L/Z(L)}{Z_{c-1}(L/Z(L))}$. Hence using induction hypothesis,
\begin{equation}\label{eq5.2}
\dim \left(\frac{L}{Z_{c}(L)}\right) \leq (m+n)^{c-1} \dim \left(\frac{L}{Z(L)}\right)^{c}.
\end{equation}
By Theorem \ref{th6}, $L^{c+1}$ is finite dimensional.
 Consider
\begin{align*}
 \left(\frac{L}{Z(L)}\right)^{c} \cong \frac{L^{c}}{L^{c} \cap Z(L)} 
 &\cong \frac{L^{c}}{Z_{c}(L)}\\
  &=\left(\frac{L^{c}}{\cap_{i=1}^{m+n} C_{L^{c}}(g_{i})}\right) \\
&\cong \left(\sum_{i=1}^{m+n}\left(\frac{L^{c}}{\ker(\text{ad}_{L^{c}}}\right)\right)\\
&= \left(\sum_{i=1}^{m+n} \text{ad}_{L^{c}}(g_{i})\right).
\end{align*}
Hence $\dim  \left(\frac{L}{Z(L)}\right)^{c} \leq (m+n) \dim L^{c+1}.$
Now substituting this in \eqref{eq5.2}, we have the required result as follows,

\[\dim \left(\frac{L}{Z_{c}(L)}\right) \leq (m+n)^{c}  \dim L^{c+1}.\] 
\end{proof}

{\bf Acknowledgement.}  The author is supported by NBHM Post Doctoral Fellowship, Govt. of India. The author is grateful to the referees for the helpful corrections.

\medskip

\end{document}